\newtheorem{lemma}{Lemma}[section]
\newtheorem{theorem}[lemma]{Theorem}
\newtheorem{proposition}[lemma]{Proposition}
\newtheorem{corollary}[lemma]{Corollary}
\theoremstyle{definition}
\newtheorem{definition}[lemma]{Definition}
\theoremstyle{remark}
\newtheorem{example}[lemma]{Example}
\newtheorem{examples}[lemma]{Examples}
\newtheorem{remark}[lemma]{Remark}
\def\SN{\mathbb N}    
\def\SZ{\mathbb Z}    
\def\SQ{\mathbb Q}    
\def\SP{\mathbb P}    
\def\mm{\mathfrak m}    
\def\pp{\mathfrak p}    
\def\qq{\mathfrak q}    
\def\PP{\mathfrak P}
\def\II{\mathfrak I} 
\def\JJ{\mathfrak J}       
\def\nn{\mathfrak n}    
\def\int{\mbox{\rm{Int}}}             
\def\Cc{\mathcal{C}}
\def\Aa{\mathcal{A}}
\def\Ss{\mathcal{S}}
\def\Oo{\mathcal{O}}
\def\Pp{\mathcal{P}}
\def\Ii{\mathcal{I}}
\def\Ss{\mathcal{S}}
\def\Gal{\mathrm{Gal}}
\def\Max{\mathrm{Max}}
\def\be{\begin{equation}}
\def\ee{\end{equation}}
\begin{document}

\title[P\'olya groups of galoisian extensions]{From P\'olya fields to P\'olya groups \\(I) Galoisian extensions}

\author{Jean-Luc Chabert}
\date{\today}

\maketitle


\begin{abstract}
The P\'olya group of a number field $K$ is the subgroup of the class group of $K$ generated by the classes of the products of the maximal ideals with same norm. A P\'olya field is a number field whose P\'olya group is trivial. Our purpose is to start with known assertions about P\'olya fields to find results concerning P\'olya groups. This first paper inspired by Zantema's results focuses on the P\'olya group of the compositum of two galoisian extensions of $\SQ$. 
\end{abstract}

\section{Introduction}

\subsection{P\'olya fields}$\empty$

\smallskip

The notion of P\'olya fields goes back to 1919 with two papers by P\'olya~\cite{bib:polya} and Ostrowski~\cite{bib:ostrowski} where they undertook a study of the integer-valued polynomials in number fields. For a fixed number field $K$ with ring of integers $\Oo_K$, they were interested in the fact that the $\Oo_K$-module $\int(\Oo_K)=\{f(X)\in K[X]\mid f(\Oo_K)\subseteq \Oo_K\}$ admits a regular basis, that is, a basis with one and only one polynomial of each degree. They proved that such a basis exists if and only if, for each integer $q$ which is the norm of a maximal ideal of $\Oo_K$, the product of all maximal ideals of $\Oo_K$ with norm $q$ is a principal ideal.

Let us introduce a notation: for each positive integer $q$, let 
\be \Pi_q(K)=\prod_{\mm\in\Max(\Oo_K),\,\vert \Oo_K/\mm\, \vert =\, q}\,\mm\,.\ee
If $q$ is not the norm of a maximal ideal of $\Oo_K$, then by convention $\Pi_q(K)=\Oo_K$. 
A formal definition was introduced only in 1982 by Zantema~\cite{bib:zantema}. Because of the characterization previously given, it could be stated in the following way:

\begin{definition}
A {\em P\'olya field} is a number field $K$ such that the following equivalent assertions are satisfied:
\begin{enumerate}
\item the $\Oo_K$-module $\int(\Oo_K)$ admits a regular basis,
\item all the ideals $\Pi_q(K)$ are principal.
\end{enumerate}
\end{definition}

Between 1919 and 1982, nothing was done about P\'olya fields. Zantema's paper is a great contribution to the subject which from our point of view is not sufficiently known. After 1982, one can find several papers on the subject, most often they are studies of particular cases: Spickermann~\cite{bib:spicker} in 1986,  Leriche (\cite{bib:leriche1}, \cite{bib:leriche2}, \cite{bib:leriche3}) in 2011--2014, Heidaryan and Rajaei (\cite{bib:BR},  \cite{bib:HR1}, \cite{bib:HR2}) in 2014--2017.

\begin{examples} The following number fields are P\'olya fields:

-- the fields $K$ with class number $h_K=1$,

-- the cyclotomic fields~\cite[Proposition 2.6]{bib:zantema},

-- the abelian extensions of $\SQ$ with only one ramified prime~\cite[Proposition 2.5]{bib:zantema},

-- the Hilbert class fields~\cite[Corollary 3.2]{bib:leriche3}.
\end{examples}

\smallskip

The notion of P\'olya field introduced for number fields has been extended to functions fields by Van Der Linden~\cite{bib:vanderlinden} in 1988. His results were then generalized by Adam~\cite{bib:david1} in 2008. But we will not speak here about the case of positive characteristic.

\subsection{P\'olya groups}$\empty$

\smallskip

The notion of P\'olya-Ostrowski group was introduced later, in 1997, in a monography about integer-valued polynomials `as  a measure of the obstruction' for K to be a P\'olya field \cite[\S II.4]{bib:CC}. For simplicity, we will call it just P\'olya group.

\begin{definition}
The {\em P\'olya group}  of a number field $K$ is the subgroup $\Pp o(K)$ of the class group $\Cc l(K)$ generated by the classes of the ideals $\Pi_q(K)$.	
\end{definition}

The P\'olya group of $K$ is also the subgroup of $\Cc l(K)$ generated by the classes of the factorials ideals of $K$ as defined by Bhargava~\cite{bib:bhargava1998}, but here we will not use this property.

Of course, the field $K$ is a P\'olya field if and only if its P\'olya group is trivial. So that, every result about P\'olya groups has a corollary which is a result about P\'olya fields. For instance, the first important result about P\'olya groups is due to Hilbert~\cite[Prop. 105--106]{bib:hilbert} who studied the ambiguous ideals since, for a galoisian number field $K,$ the P\'olya group of $K$ is nothing else than the group formed by the classes of ambiguous ideals:

\begin{proposition}[Hilbert, 1897]\label{th:06}
Let $K$ be a quadratic number field. If $s_K$ denotes the number of ramified prime numbers in the extension $K/\SQ$, then
\be \vert \Pp o(K)\vert = \left\{ \begin{array}{cl}2^{s_K-2}&\textrm{if } K \textrm{ is real and } N_{K/\SQ}(\Oo_K^\times)=\{1\}\\ 2^{s_K-1} & \textrm{else.}\end{array}\right.	 \ee  
\end{proposition}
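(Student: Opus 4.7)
The strategy is to identify the P\'olya group $\Pp o(K)$ with the $2$-torsion subgroup $\Cc l(K)[2]$ of the class group and then apply Hilbert's ambiguous class number formula.

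First I reduce the generators. For each rational prime $p$, I analyze $\Pi_q(K)$ according to the splitting of $p$ in $K/\SQ$: if $p$ splits, $\Pi_p(K) = p\,\Oo_K$ is principal; if $p$ is inert, $\Pi_{p^2}(K) = p\,\Oo_K$ is principal; and if $p\,\Oo_K = \pp^2$ ramifies, then $\Pi_p(K) = \pp$. Hence $\Pp o(K)$ is generated by the classes $[\pp_1], \ldots, [\pp_{s_K}]$ of the ramified primes. Since $\pp_i^2 = (p_i)$ is principal, $\Pp o(K) \subseteq \Cc l(K)[2]$ and is a quotient of $\SF_2^{s_K}$.

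Next I establish the reverse inclusion. The identity $\mathfrak{a}\cdot\sigma(\mathfrak{a}) = (N_{K/\SQ}(\mathfrak{a}))$ (for $\sigma$ generating $\Gal(K/\SQ)$) shows that $\sigma$ acts on $\Cc l(K)$ as inversion, so $\Cc l(K)[2] = \Cc l(K)^{\sigma}$ equals the group of ambiguous ideal classes. Every such class has a $\sigma$-stable integral ideal representative, which modulo principal ideals is a product of ramified primes; hence $\Cc l(K)[2] \subseteq \Pp o(K)$, and therefore $\Pp o(K) = \Cc l(K)[2]$.

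It then remains to count $|\Cc l(K)^{\sigma}|$, which I do via Hilbert's ambiguous class number formula
$$|\Cc l(K)^{\sigma}| \;=\; \frac{\prod_v e_v}{2\cdot [\Oo_{\SQ}^{\times}:N_{K/\SQ}(\Oo_K^{\times})]}.$$
The product of ramification indices (over all places $v$ of $\SQ$) equals $2^{s_K}$ when $K$ is real and $2^{s_K+1}$ when $K$ is imaginary, since in the imaginary case the infinite place of $\SQ$ becomes complex in $K$ and contributes an extra factor of $2$. The unit index equals $2$ when $N_{K/\SQ}(\Oo_K^{\times}) = \{1\}$ and $1$ when $N_{K/\SQ}(\Oo_K^{\times}) = \{\pm 1\}$. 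Plugging in yields $|\Pp o(K)| = 2^{s_K - 1}$ in general and $2^{s_K - 2}$ precisely when $K$ is real with $N_{K/\SQ}(\Oo_K^{\times}) = \{1\}$, as claimed. The main obstacle is the identification $\Pp o(K) = \Cc l(K)[2]$, which rests on the classical fact that ambiguous classes in a quadratic extension admit $\sigma$-stable representatives; once this is in hand, the dichotomy in the statement emerges automatically from the interplay between the ramified infinite place and the existence (or not) of a unit of norm $-1$.
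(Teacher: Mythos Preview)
Your proof contains a genuine gap: the identification $\Pp o(K) = \Cc l(K)[2]$ is false in general. While it is correct that $\Cc l(K)^\sigma = \Cc l(K)[2]$ for quadratic $K$ (since $\sigma$ acts by inversion on the class group), the step ``every such class has a $\sigma$-stable integral ideal representative'' is unjustified and in fact wrong: not every ambiguous \emph{class} contains an ambiguous \emph{ideal}. The P\'olya group $\Pp o(K) = \Ii_K^G/\Pp_K^G$ is the group of \emph{strongly} ambiguous classes, and one only has an inclusion $\Pp o(K) \subseteq \Cc l(K)^\sigma$, the quotient being isomorphic to $\bigl(\{\pm 1\}\cap N_{K/\SQ}(K^\times)\bigr)\,/\,N_{K/\SQ}(\Oo_K^\times)$. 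A concrete counterexample is $K = \SQ(\sqrt{34})$: here $s_K = 2$ and the fundamental unit $35+6\sqrt{34}$ has norm $1$, so the proposition gives $\vert\Pp o(K)\vert = 2^{0} = 1$ (indeed the ramified primes are principal, e.g.\ $\pp_2 = (6+\sqrt{34})$); yet $h_K = 2$, so $\Cc l(K)[2] = \Cc l(K)$ has order $2$.

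Your computation nonetheless lands on the right number because of a compensating error: the formula you quote, with $[\Oo_\SQ^\times : N_{K/\SQ}(\Oo_K^\times)]$ in the denominator, is \emph{not} the ambiguous class number formula for $\vert\Cc l(K)^\sigma\vert$ --- that one carries $[\Oo_\SQ^\times : \Oo_\SQ^\times \cap N_{K/\SQ}(K^\times)]$ instead --- but is precisely the formula for the number of strongly ambiguous classes, i.e.\ for $\vert\Pp o(K)\vert$ itself. The paper does not prove this proposition directly (it is attributed to Hilbert), but the machinery of Section~\ref{sec:4} recovers it: the general identity $\vert\Pp o(L/K)\vert = h_K\prod_\pp e_\pp\,/\,\vert H^1(H,\Oo_L^\times)\vert$ together with the cyclic computation of Corollary~\ref{th:44d} specializes (with base field $\SQ$ and degree $2$) to the stated dichotomy. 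That route works directly with $\Ii_L^H/\Pp_L^H$ and never passes through $\Cc l(K)[2]$, which is the correct way to proceed.
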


It is then easy to describe the quadratic number fields which are P\'olya fields (cf. \cite[Example 3.3]{bib:zantema} ou \cite[Cor. II.4.5]{bib:CC}). 

\medskip

Since, as previously said, if $K$ is a galoisian number field, the P\'olya group of $K$ is the group of strongly ambiguous classes, we find in the literature some results about P\'olya groups before their introduction. For instance, if $K$ is a quadratic number field, Mollin~\cite[1993]{bib:mollin1993} gave a sufficient condition for $\Cc l(K)$ to be generated by the classes of ramified primes, that is, such that $\Pp o(K)=\Cc l(K)$. In Section~\ref{sec:4} we recall some classical results about the order of this group of strongly ambiguous classes.
About P\'olya groups we may also cite more recent papers: \cite{bib:AC},\cite{bib:chabert2003}, \cite{bib:elliott}, \cite{bib:taous}, \cite{bib:TZ}, \cite{bib:zek}. 
\subsection{The aim of this paper}$\empty$

\smallskip

Here we are going to make a reverse lecture of this fact that an assertion about P\'olya groups implies, as a particular case, a result about P\'olya fields. We make the conjecture that an assertion about P\'olya fields is the evidence of a statement about P\'olya groups, statement that we have to formulate as a conjecture, and then, to try to prove. For instance, consider the following proposition due to Ostrowski:

\begin{proposition}\cite{bib:ostrowski}\label{th:12}
If the extension $K/\SQ$ is galoisian, then $K$ is a P\'olya field if and only if, for every non ramified prime number $p$, the ideal $\Pi_{p^{f_{p(K/\SQ)}}}(K)$ is principal.	
\end{proposition}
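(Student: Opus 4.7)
The plan rests on unpacking the structure of the ideals $\Pi_q(K)$ in a galoisian extension. First I would invoke the classical fact that $\Gal(K/\SQ)$ acts transitively on the primes of $\Oo_K$ above any rational prime $p$; hence all primes above $p$ share a common ramification index $e_p$ and residue degree $f_p := f_p(K/\SQ)$, and $p\Oo_K = (\mm_1\cdots\mm_{g_p})^{e_p}$ with $e_p f_p g_p = [K:\SQ]$. It follows at once that $\Pi_q(K)$ is a nontrivial product only when $q = p^{f_p}$ for a (unique) rational prime $p$, and in that case $\Pi_{p^{f_p}}(K) = \mm_1\cdots\mm_{g_p}$, the squarefree radical of $p\Oo_K$.

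The decisive step is the observation that at an unramified prime one has $e_p = 1$, whence $\Pi_{p^{f_p}}(K) = \mm_1\cdots\mm_{g_p} = p\Oo_K$, which is principal on the nose. Thus the principality of $\Pi_{p^{f_p}}(K)$ imposes no constraint when $p$ is unramified and is a genuine condition only at the finitely many ramified primes.

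Combining these observations with the definition --- $K$ is a P\'olya field iff all $\Pi_q(K)$ are principal --- yields the proposition. The forward implication is immediate from the definition; for the reverse, the stated hypothesis handles one class of primes while the automatic principality noted above together with the triviality of $\Pi_q(K)$ for $q$ not of the form $p^{f_p}$ finishes the job. (I read ``non ramified'' in the statement as a typographical slip for ``ramified'', since at an unramified $p$ the ideal $\Pi_{p^{f_p}}(K)$ equals $p\Oo_K$ and is always principal; the substantive content of the condition lies at the ramified primes.)

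No substantial obstacle arises: everything reduces to the transitivity of the Galois action on the primes above $p$ and the identity $\Pi_{p^{f_p}}(K) = p\Oo_K$ for unramified $p$.
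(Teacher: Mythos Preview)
Your argument is correct and matches the paper's own justification: the key observation is exactly that for an unramified prime $p$ in a galoisian extension one has $\Pi_{p^{f_p}}(K)=p\Oo_K$, so principality is automatic there and the P\'olya condition reduces to the finitely many ramified primes. Your reading of ``non ramified'' as a slip for ``ramified'' is also borne out by the paper, whose immediate corollary states that $\Pp o(K)$ is generated by the classes of the $\Pi_{p^{f_p}}(K)$ with $p$ ramified.
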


This is an easy consequence of the fact that, if the extension $K/\SQ$ is galoisian, if $p$ is not ramified, and if $q=p^{f_{p(K/\SQ)}}$, then $\Pi_q(K)=p\Oo_K$ is principal. Thus, clearly:

\begin{corollary}
If the extension $K/\SQ$ is galoisian, then the P\'olya group of $K$ is generated by the classes of the ideals $\Pi_{p^{f_p(K/\SQ)}}(K)$ where $p$ is ramified in the extension $K/\SQ$.
\end{corollary}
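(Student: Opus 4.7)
The plan is to start from the definition of $\Pp o(K)$ as the subgroup of $\Cc l(K)$ generated by \emph{all} classes $[\Pi_q(K)]$ for $q\geq 1$, and then prune this generating family down to the ideals associated to ramified primes. I expect the argument to be short: it is essentially a bookkeeping exercise organised around Proposition~\ref{th:12}.

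First I would observe that $\Pi_q(K)$ differs from $\Oo_K$ only when $q$ is the norm of some maximal ideal $\mm$ of $\Oo_K$. Since $K/\SQ$ is galoisian, $\Gal(K/\SQ)$ acts transitively on the set of maximal ideals above a given rational prime $p$, so all such $\mm$ share a common residue degree $f_p(K/\SQ)$ and hence a common norm $p^{f_p(K/\SQ)}$. Conversely any maximal ideal lies above a unique rational prime, so the residue cardinality $q=\vert\Oo_K/\mm\vert$ is automatically of the form $p^{f_p(K/\SQ)}$. Consequently the non-trivial ideals among the $\Pi_q(K)$ are exactly those of the form $\Pi_{p^{f_p(K/\SQ)}}(K)=\prod_{\mm\mid p}\mm$, indexed by the rational primes $p$, and these classes alone generate $\Pp o(K)$.

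Next I would remove the unramified primes from this generating family. As noted immediately before the statement of the corollary, when $p$ is unramified the factorisation $p\Oo_K=\prod_{\mm\mid p}\mm$ identifies $\Pi_{p^{f_p(K/\SQ)}}(K)$ with the principal ideal $p\Oo_K$, whose class is therefore trivial in $\Cc l(K)$ and can be dropped from any generating set. What remains is exactly the family of classes $[\Pi_{p^{f_p(K/\SQ)}}(K)]$ with $p$ ramified in $K/\SQ$. There is no serious obstacle to flag: the statement is a direct group-theoretic repackaging of Proposition~\ref{th:12}, the latter having already done the work of singling out which $\Pi_q(K)$ can possibly fail to be principal.
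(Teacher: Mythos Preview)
Your proposal is correct and follows the same approach as the paper: the paper does not give a separate proof of this corollary but simply notes that it follows from the observation (made just before the statement) that $\Pi_{p^{f_p(K/\SQ)}}(K)=p\Oo_K$ is principal whenever $p$ is unramified, and you have spelled out exactly this reasoning. Your additional remark explaining why the only non-trivial $\Pi_q(K)$ are of the form $\Pi_{p^{f_p(K/\SQ)}}(K)$ in the galoisian case is a welcome clarification of a point the paper leaves implicit.
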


We are going to take advantage of the results of Zantema's paper on P\'olya fields to obtain results on P\'olya groups. In this first study we restrict ourselves to the easier case, that is the galoisian case. Here is Zantema's theorem that we want to transpose. It concerns the compositum of two galoisian extensions of $\SQ$.

\begin{theorem}\cite[Thm 3.4]{bib:zantema}\label{th:21}
Let $K_1$ and $K_2$ be finite Galois extensions	of $\SQ$, $L=K_1\cdot K_2$, $K=K_1\cap K_2$. For a prime $p,$ let $e_i(p)$ be the ramification index of $p$ in $K_i/K$, $i=1,2$. Then
\begin{enumerate}
\item[(a)] If $\gcd(e_1(p),e_2(p))=1$ for all primes $p$ and if $K_1$ and $K_2$ are P\'olya fields, then $L$ is a P\'olya field.
\item[(b)] If either $[K_1:K], [K_2:K], [K:\SQ]$ are pairwise relatively prime, or $K$ is a P\'olya field and $\gcd([K_1:K],[K_2:K])=1$, then $K_1$ and $K_2$ are P\'olya fields if $L$ is a P\'olya field.
\end{enumerate}
\end{theorem}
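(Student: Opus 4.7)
The plan is to reduce everything to Ostrowski's criterion (Proposition~\ref{th:12}): a Galois number field $F/\SQ$ is a P\'olya field if and only if $\Pi_p^\star(F):=\prod_{\mathfrak{P}\mid p}\mathfrak{P}$ is principal for each ramified prime $p$ (and automatically when $p$ is unramified, since then $\Pi_p^\star(F)=p\Oo_F$). Throughout I exploit three functorial identities, valid whenever $\SQ\subset F\subset F'$ is a tower of Galois extensions of $\SQ$:
\begin{enumerate}
\item[(i)] $\Pi_p^\star(F)^{e_p(F/\SQ)}=p\Oo_F$, so the class of $\Pi_p^\star(F)$ in $\Cc l(F)$ is killed by $e_p(F/\SQ)$;
\item[(ii)] $\Pi_p^\star(F)\Oo_{F'}=\Pi_p^\star(F')^{e(F'/F)}$;
\item[(iii)] $N_{F'/F}\bigl(\Pi_p^\star(F')\bigr)=\Pi_p^\star(F)^{[F':F]/e(F'/F)}$.
\end{enumerate}
The one piece of ramification theory needed is that, with $L=K_1K_2$ and $K=K_1\cap K_2$, the canonical map $\mathrm{Gal}(L/K)\to\mathrm{Gal}(K_1/K)\times\mathrm{Gal}(K_2/K)$ is an isomorphism. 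The inertia group of any prime of $L$ above $p$ then sits inside the product $I_1\times I_2$ of the inertia groups, surjects onto each factor, and hence has order divisible by both $e_1(p)$ and $e_2(p)$; when $\gcd(e_1(p),e_2(p))=1$ this forces $e_p(L/K)=e_1(p)e_2(p)$.

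For part (a) fix a prime $p$ and set $e_1=e_1(p)$, $e_2=e_2(p)$. From the preceding ramification identity, $e(L/K_1)=e_2$ and $e(L/K_2)=e_1$, so by (ii)
\[
\Pi_p^\star(K_1)\Oo_L=\Pi_p^\star(L)^{e_2},\qquad \Pi_p^\star(K_2)\Oo_L=\Pi_p^\star(L)^{e_1}.
\]
Choosing $u,v\in\SZ$ with $ue_1+ve_2=1$ gives
\[
\Pi_p^\star(L)=\bigl(\Pi_p^\star(K_1)\Oo_L\bigr)^{v}\bigl(\Pi_p^\star(K_2)\Oo_L\bigr)^{u}
\]
as fractional ideals of $L$. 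Each factor on the right is principal (either because $p$ is unramified in the corresponding $K_i$, or because $K_i$ is a P\'olya field), so $\Pi_p^\star(L)$ is principal; Ostrowski then yields that $L$ is a P\'olya field.

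For part (b) let $p$ be ramified in $K_1$; by Ostrowski it suffices to show $\Pi_p^\star(K_1)$ is principal. Write $m=[K:\SQ]$, $n_i=[K_i:K]$, $e=e_p(K/\SQ)$. In both sub-cases $\gcd(n_1,n_2)=1$, hence $\gcd(e_1,e_2)=1$, $e(L/K_1)=e_2$, and $[L:K_1]/e(L/K_1)=n_2/e_2$. Identity (iii) applied to $L/K_1$ reads
\[
\Pi_p^\star(K_1)^{n_2/e_2}=N_{L/K_1}\bigl(\Pi_p^\star(L)\bigr),
\]
which is principal because $L$ is a P\'olya field. It then suffices to exhibit a second exponent $r$ with $\Pi_p^\star(K_1)^{r}$ principal and $\gcd(r,n_2/e_2)=1$, for then a B\'ezout combination forces $\Pi_p^\star(K_1)$ principal. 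In the pairwise-coprime sub-case take $r=e_p(K_1/\SQ)=ee_1$ via (i); the divisibilities $e\mid m$, $e_1\mid n_1$, $e_2\mid n_2$ together with $\gcd(m,n_2)=\gcd(n_1,n_2)=1$ force $\gcd(ee_1,n_2)=1$ and hence $\gcd(ee_1,n_2/e_2)=1$. In the other sub-case $K$ is a P\'olya field, so $\Pi_p^\star(K)$ is principal, whence by (ii) so is $\Pi_p^\star(K_1)^{e_1}=\Pi_p^\star(K)\Oo_{K_1}$; one then takes $r=e_1$, and $\gcd(e_1,n_2/e_2)=1$ because $e_1\mid n_1$ and $\gcd(n_1,n_2)=1$. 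By symmetry the same conclusion applies to $K_2$. The technical core of the proof is thus the ramification identity $e_p(L/K)=e_1(p)e_2(p)$ under coprimality of the local indices; once it is in hand, everything else is straightforward bookkeeping with exponents in the class groups of the relevant fields, carried out through the identities (i)--(iii).
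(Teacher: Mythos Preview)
Your proof is correct and uses essentially the same ingredients as the paper: the B\'ezout identity on ramification indices for part~(a) is exactly Lemma~\ref{th:c'}, and your norm--coprimality argument for part~(b) is the ideal-level version of Lemma~\ref{le:24} and Proposition~\ref{cor:26}. The only cosmetic difference is that the paper states things at the level of P\'olya groups (proving the stronger Proposition~\ref{th:213} and Theorems~\ref{th:24}, \ref{th:241}, of which Theorem~\ref{th:21} is the ``trivial group'' case) and invokes Abhyankar's lemma for the ramification identity, whereas you argue directly with the ideals $\Pi_p^\star$ and obtain $e_p(L/K)=e_1e_2$ from the inertia-group embedding in $I_1\times I_2$; in the coprime situation these are equivalent.
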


The transposition of this theorem in terms of P\'olya groups will be done with Proposition~\ref{th:213} and Theorems~\ref{th:24} and \ref{th:241}.


\section{Galoisian extensions of $\SQ$}\label{sec:2}

\noindent{\bf Notation}. For every number field $K$, we denote by $\Oo_K$ its ring of integers, $\Oo_K^\times$ its group of units, $\Ii_K$ the group a nonzero fractional ideals, $\Pp_K$ the subgroup of nonzero principal fractional ideals, $\Cc l(K)=\Ii_K/\Pp_K$ its class group, and $h_K=\vert \Cc l(K)\vert$ its class number.

Recall that, for every $q\in\SN^*$, $\Pi_q(K)$ denotes the product of the maximal ideals of $\Oo_K$ with norm $q$ and that $\Pp o(K)$ is the subgroup of $\Cc l(K)$ generated by the classes of these $\Pi_q(K)$'s.

If $K$ is a galoisian extension of $\SQ$, for simplicity we will write $\Pi_{p^*}(K)$ instead of $\Pi_{p^{f_{p(K/\SQ)}}}(K)$, and if $G$ denotes the Galois group $\Gal(K/\SQ)$, $\Ii_K^G$ will denote the subgroup of $\Ii_K$ formed by the ambiguous ideals of $K$. 

\smallskip

Note that, in the galoisian case, the group of ambiguous ideals $\Ii_K^G$ is generated by the ideals $\Pi_{p^*}(K)$, in fact, this is the free abelian group with basis formed by the $\Pi_{p^*}(K)$ that are distinct from $\Oo_K$. As a consequence, the image of $\Ii_K^G$ in the class group $\Cc l(K)$ is the P\'olya group of $K:$
\be \Pp o(K) = \Ii_K^G/\Pp_K^G\,.\ee


In order to establish links between different P\'olya groups, we have to recall several homomorphisms of groups (cf.~\cite[I, \S 5]{bib:serre}). 
If L/K is a finite extension, there is a natural injective morphism
\be j_K^L:\II\in\Ii_K\mapsto \II\Oo_L\in\Ii_L \ee
which induces a (non necessarily injective) morphism:
\be \varepsilon_K^L:\overline{\II}\in\Cc l(K)\mapsto\overline{\II\Oo_L}\in\Cc l(L).\ee
On the other hand, the norm morphism 
$$ N_L^K:\Ii_L\to\Ii_K,$$
which is determined by its values on the maximal ideals $\nn$ of $\Oo_L:$
\be N_L^K(\nn)=\mm^{[\Oo_L/\nn \,:\, \Oo_K/\mm]} \;\textrm{ where } \;\mm=\nn\cap \Oo_K,\ee 
induces a morphism:
\be \nu_L^K:\overline{\JJ}\in\Cc l(L)\mapsto\overline{N_L^K(\JJ)}\in\Cc l(K)\,.\ee 
And, for every $\II\in\Ii_K$, one has:
\be (N_L^K\circ j_K^L)(\II)=\II^{[L:K]}.\ee

As  noticed in \cite{bib:chabert2003}, if $K$ and $L$ are galoisian extensions of $\SQ$ with Galois groups $H$ and $G$ respectively, then the subgroups $\Ii_K^H$ and $\Ii_L^G$ on the one hand, and the subgroups $\Pp o(K)$ and $\Pp o(L)$ on the other hand, behave well with respect to the previous morphisms:

\begin{lemma} If $K\subseteq L$ are two galoisian extensions of $\SQ$ with respective Galois groups $H$ and $G$, then:
$$j_K^L(\Ii_K^H)\subseteq \Ii_L^G\quad and\quad \varepsilon_K^L(\Pp o(K))\subseteq \Pp o(L)\,,$$
$$N_L^K(\Ii_L^G)\subseteq\Ii_K^H\quad and\quad \nu_l^K(\Pp o(L))\subseteq \Pp o(K)\,.$$
\end{lemma}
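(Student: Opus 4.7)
The plan is to exploit the standard fact that, since $K/\SQ$ and $L/\SQ$ are both Galois with $K\subseteq L$, the restriction map $\rho : G \to H$, $\sigma \mapsto \sigma|_K$, is a surjective group homomorphism (with kernel $\Gal(L/K)$). The four inclusions will all follow by combining $\rho$ with the naturality of $j_K^L$ and of $N_L^K$ with respect to the Galois action.

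First I would handle $j_K^L(\Ii_K^H) \subseteq \Ii_L^G$ directly: for $\II \in \Ii_K^H$ and any $\sigma \in G$, since $\sigma$ acts on elements of $K$ as $\rho(\sigma) \in H$, one has
\[ \sigma(\II\,\Oo_L) \;=\; \sigma(\II)\cdot\sigma(\Oo_L) \;=\; \rho(\sigma)(\II)\cdot\Oo_L \;=\; \II\,\Oo_L, \]
so that $\II\,\Oo_L$ is $G$-ambiguous.

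For the inclusion $N_L^K(\Ii_L^G) \subseteq \Ii_K^H$, the crucial step is a naturality identity
\[ \tau\circ N_L^K \;=\; N_L^K\circ \tilde\tau \]
for every $\tau \in H$ and any lift $\tilde\tau\in\rho^{-1}(\tau)$. I would check this on a maximal ideal $\nn$ of $\Oo_L$ lying over $\mm=\nn\cap\Oo_K$: the ring automorphism $\tilde\tau$ induces compatible isomorphisms $\Oo_L/\nn \cong \Oo_L/\tilde\tau(\nn)$ and $\Oo_K/\mm \cong \Oo_K/\tau(\mm)$, so the residue degree entering the definition of $N_L^K$ is preserved and $N_L^K(\tilde\tau(\nn)) = \tau(N_L^K(\nn))$. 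Applying the resulting identity to $\JJ\in\Ii_L^G$, which satisfies $\tilde\tau(\JJ)=\JJ$, yields $\tau(N_L^K(\JJ))=N_L^K(\JJ)$, so $N_L^K(\JJ)\in\Ii_K^H$.

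Finally, the two statements about P\'olya groups will be a formal consequence. Since $j_K^L$ and $N_L^K$ both send principal ideals to principal ideals (using $N_L^K((\alpha))=(N_{L/K}(\alpha))$ on the norm side), they descend to the class-group maps $\varepsilon_K^L$ and $\nu_L^K$. As noted just before the lemma, $\Pp o(K)$ is precisely the image of $\Ii_K^H$ in $\Cc l(K)$ and similarly for $L$, so the two inclusions just proved on ambiguous ideal groups pass immediately to the corresponding inclusions for P\'olya groups. The only substantive point in the whole argument will be the naturality identity for the norm; everything else is bookkeeping.
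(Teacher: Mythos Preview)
Your argument is correct. The restriction homomorphism $\rho:G\to H$ together with the naturality of the norm under Galois automorphisms is exactly what is needed, and your passage from ambiguous-ideal groups to P\'olya groups via $\Pp o(K)=\Ii_K^H/\Pp_K^H$ is clean.

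Note, however, that the paper itself does not supply a proof of this lemma: it simply records the statement and refers to \cite{bib:chabert2003}. The surrounding context of the paper suggests a slightly different, more computational route: since $\Ii_K^H$ is the free abelian group on the $\Pi_{p^*}(K)$'s (and likewise for $L$), one can check the inclusions on these generators via the explicit identities
\[
j_K^L\bigl(\Pi_{p^*}(K)\bigr)=\Pi_{p^*}(L)^{\,e_p(L/K)}\quad\text{and}\quad N_L^K\bigl(\Pi_{p^*}(L)\bigr)=\Pi_{p^*}(K)^{\,f_p(L/K)},
\]
which the paper in effect uses in the very next lemma. Your Galois-equivariance argument is more conceptual and avoids any splitting-of-primes bookkeeping; the generator approach is more explicit and feeds directly into the quantitative statements that follow. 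Either is perfectly adequate here.
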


This is no more true if one of the fields is not a galoisian extension of $\SQ$ (cf. \cite[Ex. 2]{bib:leriche1}). We begin with a lemma which is a `local version' of the behaviour of the P\'olya groups.

\begin{lemma}\label{le:24}
Let $K_0\subseteq K_1\subseteq L$ be three galoisian extensions of  $\SQ$ and let $p\in\SP$ be such that:
\begin{enumerate}
\item $\Pi_{p^*}(K_0)$ is principal,
\item $e_p(K_1/K_0)$ et $[L:K_1]$ are relatively prime.
\end{enumerate}
Then, the morphism $\nu_L^{K_1}\circ \varepsilon_{K_1}^L:\Pp o(K_1)\to\Pp o(L)\to\Pp o(K_1)$ induces an automorphism of the group $\langle \,\overline{\Pi_{p^*}(K_1)}\,\rangle $. 
In particular, $\langle\,\overline{\Pi_{p^*}(L)}\, \rangle$ contains a subgroup isomorphic to $\langle\,\overline{\Pi_{p^*}(K_1)}\,\rangle $. 
\end{lemma}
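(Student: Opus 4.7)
The crucial identity is the one recalled in the excerpt: for every $\II\in\Ii_{K_1}$,
$$(N_L^{K_1}\circ j_{K_1}^L)(\II)=\II^{[L:K_1]}.$$
Passing to classes, $\nu_L^{K_1}\circ\varepsilon_{K_1}^L$ is nothing but the $[L:K_1]$-th power map on $\Pp o(K_1)$. Consequently it sends the cyclic subgroup $\langle\,\overline{\Pi_{p^*}(K_1)}\,\rangle$ into itself, and inducing an automorphism there reduces to showing that the order of $\overline{\Pi_{p^*}(K_1)}$ in $\Cc l(K_1)$ is coprime to $[L:K_1]$.

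To bound this order I would use hypothesis (1) via the ideal identity
$$\Pi_{p^*}(K_0)\,\Oo_{K_1}=\Pi_{p^*}(K_1)^{e_p(K_1/K_0)}$$
in any Galois tower $K_0\subseteq K_1$. This is immediate from the galoisian case: every maximal ideal $\mm$ of $\Oo_{K_0}$ above $p$ decomposes as $\mm\Oo_{K_1}=\prod_{\nn\mid\mm}\nn^{e_p(K_1/K_0)}$ with a common ramification index, and $\Pi_{p^*}(K_i)$ is exactly the product of all maximal ideals of $\Oo_{K_i}$ sitting above $p$. Since $\Pi_{p^*}(K_0)$ is principal by hypothesis (1), the left-hand side is principal in $\Oo_{K_1}$, hence so is $\Pi_{p^*}(K_1)^{e_p(K_1/K_0)}$; therefore the order of $\overline{\Pi_{p^*}(K_1)}$ divides $e_p(K_1/K_0)$. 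Combined with hypothesis (2) this order is coprime to $[L:K_1]$, so the $[L:K_1]$-th power map is an automorphism of $\langle\,\overline{\Pi_{p^*}(K_1)}\,\rangle$.

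For the \emph{in particular} clause, since the composition $\nu_L^{K_1}\circ\varepsilon_{K_1}^L$ is now a bijection on $\langle\,\overline{\Pi_{p^*}(K_1)}\,\rangle$, its first factor $\varepsilon_{K_1}^L$ must already be injective on this subgroup. Its image lies in $\langle\,\overline{\Pi_{p^*}(L)}\,\rangle$ by the analogous identity
$$\Pi_{p^*}(K_1)\,\Oo_L=\Pi_{p^*}(L)^{e_p(L/K_1)}$$
applied to the galoisian tower $K_1\subseteq L$. Hence $\langle\,\overline{\Pi_{p^*}(L)}\,\rangle$ contains an isomorphic copy of $\langle\,\overline{\Pi_{p^*}(K_1)}\,\rangle$.

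I do not expect any genuine obstacle: the only mildly technical point is the ideal identity extracting a power of $\Pi_{p^*}(K_1)$ from $\Pi_{p^*}(K_0)\Oo_{K_1}$, but it is a direct consequence of the constancy of the ramification index in a galoisian extension. The rest is bookkeeping with the transfer identity $N_L^{K_1}\circ j_{K_1}^L=[L:K_1]\cdot\mathrm{id}$ and the coprimality hypothesis (2).
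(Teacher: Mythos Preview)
Your proof is correct and follows essentially the same route as the paper: bound the order of $\overline{\Pi_{p^*}(K_1)}$ by $e_p(K_1/K_0)$ via the identity $\Pi_{p^*}(K_0)\Oo_{K_1}=\Pi_{p^*}(K_1)^{e_p(K_1/K_0)}$ and hypothesis (1), then observe that $\nu_L^{K_1}\circ\varepsilon_{K_1}^L$ is the $[L:K_1]$-th power map, which is an automorphism of the cyclic subgroup by hypothesis (2). Your treatment of the ``in particular'' clause (injectivity of $\varepsilon_{K_1}^L$ on the subgroup, image landing in $\langle\,\overline{\Pi_{p^*}(L)}\,\rangle$ via $\Pi_{p^*}(K_1)\Oo_L=\Pi_{p^*}(L)^{e_p(L/K_1)}$) is actually more explicit than the paper, which simply declares it obvious.
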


\begin{proof}
Let $\Pi=\Pi_{p^*}(K_1)$. The equality $\Pi_{p^*}(K_0)\Oo_K=\Pi^{e_p(K_1/K_0)}$ 	
implies with hypothesis (1) that the order of the class $\overline{\Pi}$ of $\Pi$ divides $e_p(K_1/K_0)$. It follows from hypothesis (2) that this order is prime to $[L:K_1]$.
The formula $\nu_L^K(\varepsilon_K^L(\overline{\Pi}))=\overline{\Pi}^{[L:K_1]}$ shows then that the morphism $\nu_L^{K_1}\circ\varepsilon_{K_1}^L$ induces an automorphism of the group generated by $\overline{\Pi_{p^*}(K_1)}$. The last assertion is obvious.
\end{proof}

\noindent By `globalization' Lemma~\ref{le:24} leads to:

\begin{proposition}\label{cor:26}
Let $K_0\subseteq K_1\subseteq L$ be three galoisian extensions of $\SQ$ such that: 
\begin{enumerate}
\item $K_0$ is a P\'olya field,
\item $([L:K_1],[K_1:K_0])=1$.
\end{enumerate}
Then, $\Pp o(L)$ contains a subgroup which is isomorphic to $\Pp o(K_1)$. More precisely,
$$\Pp o(L)\simeq \Pp o(K_1)\oplus \left( \, \Pp o(L)\,/\,\varepsilon_{K_1}^L(\Pp o(K_1)) \,\right) $$
\end{proposition}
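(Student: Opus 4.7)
The plan is to upgrade Lemma~\ref{le:24} from a prime-by-prime statement about the cyclic subgroups $\langle\overline{\Pi_{p^*}(K_1)}\rangle$ to a global statement about the entire group $\Pp o(K_1)$, by showing that $\nu_L^{K_1}\circ\varepsilon_{K_1}^L$ restricts to an automorphism of $\Pp o(K_1)$. Once this is established, a standard splitting argument yields the direct sum decomposition.

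The first step is to bound the exponent of $\Pp o(K_1)$. By the corollary of Proposition~\ref{th:12} (stated in the introduction), $\Pp o(K_1)$ is generated by the classes $\overline{\Pi_{p^*}(K_1)}$ for primes $p$ ramified in $K_1/\SQ$. For each such $p$, hypothesis (1) gives that $\Pi_{p^*}(K_0)$ is principal, so the identity $\Pi_{p^*}(K_0)\Oo_{K_1}=\Pi_{p^*}(K_1)^{e_p(K_1/K_0)}$ implies that the order of $\overline{\Pi_{p^*}(K_1)}$ divides $e_p(K_1/K_0)$, hence divides $[K_1:K_0]$. Consequently the exponent of $\Pp o(K_1)$ divides $[K_1:K_0]$, and by hypothesis (2) it is coprime to $[L:K_1]$.

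The next step uses the formula $(N_L^{K_1}\circ j_{K_1}^L)(\II)=\II^{[L:K_1]}$, which after passing to classes shows that $\nu_L^{K_1}\circ\varepsilon_{K_1}^L$ acts as multiplication by $[L:K_1]$ on $\Cc l(K_1)$, and in particular on its subgroup $\Pp o(K_1)$. Combined with the previous exponent bound, this endomorphism $\alpha$ of $\Pp o(K_1)$ is an automorphism. Therefore $\varepsilon_{K_1}^L$ is injective on $\Pp o(K_1)$, and $\alpha^{-1}\circ\nu_L^{K_1}\colon\Pp o(L)\to\Pp o(K_1)$ is a retraction of $\varepsilon_{K_1}^L|_{\Pp o(K_1)}$; the short exact sequence
$$0\to \varepsilon_{K_1}^L(\Pp o(K_1))\to \Pp o(L)\to \Pp o(L)/\varepsilon_{K_1}^L(\Pp o(K_1))\to 0$$
therefore splits, giving the asserted decomposition. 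The only nontrivial ingredient is the exponent argument in the second step: individual generators have orders dividing $[K_1:K_0]$, but one must observe that this transfers to the full group $\Pp o(K_1)$ in order to conclude that the coprimality hypothesis promotes multiplication by $[L:K_1]$ to a global automorphism.
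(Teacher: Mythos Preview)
Your proof is correct and follows essentially the same approach as the paper: both bound the exponent of $\Pp o(K_1)$ by $[K_1:K_0]$ via the principality of $\Pi_{p^*}(K_0)$, then use that $\nu_L^{K_1}\circ\varepsilon_{K_1}^L$ is raising to the $[L:K_1]$-th power to conclude it is an automorphism of $\Pp o(K_1)$. The only cosmetic difference is in the final splitting: you invoke a retraction to split the short exact sequence, whereas the paper additionally observes that the quotient $\Pp o(L)/\varepsilon_{K_1}^L(\Pp o(K_1))$ has exponent dividing $[L:K_1]$ and then identifies the two summands with the $[K_1:K_0]$-torsion and $[L:K_1]$-torsion parts of $\Pp o(L)$, which yields slightly more structural information but is not needed for the stated conclusion.
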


\begin{proof}  Let $n_0=[K_0:\SQ]$, $n_1=[K_1:K_0]$ et $n_2=[L:K_1]$.

\bigskip

\begin{tikzpicture}

\node[draw] (Q) at (0,0) {$\SQ$};
\node[draw] (K0) at (4,0) {$K_0$};
\node[draw] (K1) at (8,0) {$K_1$};
\node[draw] (L) at (12,0) {$L$};

\node[above] (n0) at (2,0) {$n_0$};
\node[above] (n1) at (6,0) {$n_1$};
\node[above] (n2) at (10,0) {$n_2$};

\draw (Q) edge[] (K0);
\draw (K0) edge[] (K1);
\draw (K1) edge[] (L);

\end{tikzpicture}

\bigskip

The group $\Pp o(K_1)$ is generated by the classes of the $\Pi_{p^*}(K_1)$'s whose orders divide $e_p(K_1/K_0)$, which itself divides $n_1$. Thus, the order of every element of the abelian group $\Pp o(K_1)$ divides $n_1$. Consequently the restriction of the morphism $\nu_L^{K_1}\circ \varepsilon_{K_1}^L$ to $\Pp o(K_1),$ which corresponds to the  elevation to the power $n_2,$ is an automorphism of $\Pp o(K_1)$ since $(n_1,n_2)=1$. Thus, the subgroup $\varepsilon_{K_1}^L(\Pp o(K_1))$ of $\Pp o(L)$ is isomorphic to $\Pp o(K_1)$.

On the other hand, the group $\Pp o(L)$ is generated by the classes of the $\Pi_{p^*}(L)$'s. But, the subgroup $\varepsilon_{K_1}^L(\Pp o(K_1))$ contains $\Pi_{p^*}(K_1)\Oo_L=\Pi_{p^*}(L)^{e_p(L/K_1)}$. Thus, mod $\varepsilon_{K_1}^L(\Pp o(K_1))$, the order of $\overline{\Pi_{p^*}(L)}$ divides $e_p(L/K_1)$ which itself divides $n_2$. Consequently, the order of every element of $\Pp o(L)$ mod $\varepsilon_{K_1}^L(\Pp o(K_1))$ divides $n_2$. 

As all the orders of the elements of $\Pp o(L)$ divide $n_1n_2$, one has:
$$\Pp o(L)= G_{n_1}\oplus G_{n_2}\quad\textrm{where} \quad G_{n_i}=\{\alpha\in\Pp o(L)\mid \alpha^{n_i}=1\}\,.$$ Clearly, 

\quad\quad\quad $G_{n_1}=\varepsilon_{K_1}^L(\Pp o(K_1))\simeq \Pp o(K_1) \quad\textrm{ and }\quad G_{_2}\simeq \Pp o(L)\,/\,\varepsilon_{K_1}^L(\Pp o(K_1)) \,.$
\end{proof}

\noindent In particular, if $K_0=\SQ,$ one has:

\begin{corollary}\label{cor:26a}
	Let $K_1\subseteq L$ be two galoisian extensions of $\SQ$ such that one has $([L:K_1],[K_1:\SQ])=1$. Then $\Pp o(L)$ contains a subgroup isomorphic to $\Pp o(K_1)$. In particular, if $L$ is a P\'olya field, $K_1$ is also a P\'olya field.
\end{corollary}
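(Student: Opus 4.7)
The plan is to obtain this corollary as a direct specialization of Proposition~\ref{cor:26} by taking $K_0 = \SQ$. The corollary's hypotheses match the Proposition's hypothesis (2) verbatim with this choice, so the only thing to check is hypothesis (1): that $\SQ$ itself is a P\'olya field. This is immediate, since the class group of $\SQ$ is trivial; equivalently, $\Pi_q(\SQ) = q\SZ$ is principal for every $q\in\SN^*$.

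Once the specialization is licensed, Proposition~\ref{cor:26} yields the decomposition
$$ \Pp o(L) \;\simeq\; \Pp o(K_1) \,\oplus\, \bigl(\, \Pp o(L)\,/\,\varepsilon_{K_1}^L(\Pp o(K_1)) \,\bigr),$$
from which the first assertion, namely that $\Pp o(L)$ contains a subgroup isomorphic to $\Pp o(K_1)$, is immediate (take the first summand, realized concretely as $\varepsilon_{K_1}^L(\Pp o(K_1))$).

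For the ``in particular'' part, I would simply observe that $L$ being a P\'olya field means $\Pp o(L)$ is trivial; since a subgroup of the trivial group is trivial, $\Pp o(K_1)$ must also vanish, so $K_1$ is a P\'olya field as well.

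There is no real obstacle here beyond checking that the specialization to $K_0 = \SQ$ is legitimate; the corollary is stated separately only to isolate the most frequently used case of Proposition~\ref{cor:26}, which already carries all the substantive content (the use of Lemma~\ref{le:24} to ensure the morphism $\nu_L^{K_1}\circ\varepsilon_{K_1}^L$ acts as an automorphism of $\Pp o(K_1)$ under the coprimality hypothesis).
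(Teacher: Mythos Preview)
Your proposal is correct and matches the paper's approach exactly: the paper introduces this corollary with the phrase ``In particular, if $K_0=\SQ$, one has:'' and gives no further proof, treating it as an immediate specialization of Proposition~\ref{cor:26}. Your additional remarks spelling out why $\SQ$ is a P\'olya field and how the ``in particular'' clause follows are accurate and fill in the obvious details the paper leaves implicit.
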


\begin{example}
 Let $K$ be a non-galoisian cubic number field. Denote by $L$ the galoisian closure of $K$ and by $K'$ the unique quadratic number field such that $L=KK'$. Then,  $\Pp o(K')\subseteq \Pp o(L).$	 
\end{example}


\section{Compositum of galoisian extensions}\label{sec:3}

First let us recall a strong version of Abhyankar's lemma (see for instance~\cite[Prop. III.8.9]{bib:stich} for the function field case): 

\begin{proposition}\cite{bib:EH}\label{th:113}
Let $A$ be a Dedekind domain with fraction field $K$. Let $\pp$ be a maximal ideal of $A$ and assume that the residue field $A/\pp$ is perfect. Let $K_1$ and $K_2$ be two finite extensions of $K$. Denote by $L$ the compositum of $K_1$ and $K_2$ and assume that the extension $L/K$ is separable. Let $\qq$ be a prime ideal of  $L$ such that $\qq\cap K=\pp$. Let $\pp_1=\qq\cap K_1$, $\pp_2=\qq\cap K_2$, and $p\SZ=\qq\cap \SZ$. If $p$ does not divide one of the ramification indices $e(\pp_i/\pp)$ $(i=1,2)$, then $$e(\qq/\pp)=\mathrm{lcm}\,\{e(\pp_1/\pp),e(\pp_2/\pp)\}.$$
\end{proposition}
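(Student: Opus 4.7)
The plan is to pass to the completion at $\pp$ and then exploit the classical structure of tamely ramified local extensions.

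First, I would localize $A$ at $\pp$ and complete. Let $\hat A$, $\hat K$ denote the resulting complete DVR and its fraction field, $\hat K_i$ the completion of $K_i$ at $\pp_i$, and $\hat L$ the completion of $L$ at $\qq$. Completion preserves ramification indices and residue degrees; since $L/K$ is separable, $\hat L$ can be identified with the compositum $\hat K_1 \hat K_2$ inside an algebraic closure of $\hat K$. The residue field is still perfect, and the problem reduces to the purely local statement: for finite separable extensions $\hat K_1, \hat K_2$ of $\hat K$ with ramification indices $e_1, e_2$, if $p \nmid e_1$, then $e(\hat L/\hat K) = \mathrm{lcm}(e_1,e_2)$.

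Second, I would strip off the unramified parts. Let $\hat K_i^{\mathrm{nr}}$ be the maximal unramified subextension of $\hat K_i/\hat K$. Because the residue field is perfect, the compositum $F := \hat K_1^{\mathrm{nr}} \hat K_2^{\mathrm{nr}}$ is unramified over $\hat K$, so replacing $\hat K$ by $F$ preserves every ramification index and the compositum. Hence I may assume that each $\hat K_i/\hat K$ is totally ramified of degree $e_i$. Note that $\mathrm{lcm}(e_1,e_2) \mid e(\hat L/\hat K)$ is automatic from $e_i = e(\hat K_i/\hat K) \mid e(\hat L/\hat K)$; only the reverse divisibility must be proved.

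Third, I would carry out the tame calculation. Since $p \nmid e_1$, choose a uniformizer $\pi_1$ of $\hat K_1$ satisfying an Eisenstein equation $\pi_1^{e_1} = u\pi$ over $\hat K$, where $\pi$ is a uniformizer of $\hat K$ and $u$ a unit. Writing $\pi = u' \pi_2^{e_2}$ inside $\hat K_2$ for a uniformizer $\pi_2$ and unit $u'$, and setting $d = \gcd(e_1,e_2)$, $e_1 = d e_1'$, $e_2 = d e_2'$ with $\gcd(e_1',e_2')=1$, the element $\gamma := \pi_1^{e_1'} \pi_2^{-e_2'}$ satisfies $\gamma^d = uu'$, a unit. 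Hence $\hat K_2(\gamma)/\hat K_2$ is unramified (the polynomial $X^d - uu'$ is separable modulo $\pi_2$ because $p \nmid d$). Over $\hat K_2(\gamma)$, the element $\pi_1$ is a root of $X^{e_1'} - \gamma \pi_2^{e_2'}$, whose Newton polygon consists of a single segment of slope $-e_2'/e_1'$ in lowest terms; the polynomial is therefore irreducible and defines a totally ramified extension of degree $e_1'$. Combining, $e(\hat L/\hat K_2) = e_1'$, whence $e(\hat L/\hat K) = e_1' e_2 = \mathrm{lcm}(e_1,e_2)$.

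The main obstacle is the bookkeeping in the third step: the unit factors $u, u'$ from the Eisenstein equations force an auxiliary unramified extension $\hat K_2(\gamma)$ before one can exhibit a genuine Eisenstein radical, and the coprimality $\gcd(e_1', e_2') = 1$ is essential, since it is precisely what makes the Newton polygon indivisible and hence the radical polynomial irreducible of the full expected degree $e_1'$.
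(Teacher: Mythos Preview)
The paper does not actually prove this proposition: it is quoted as a ``strong version of Abhyankar's lemma'' with a reference to \cite{bib:EH} (and to \cite[Prop.~III.8.9]{bib:stich} for the function-field case), and is then used as a black box in the proof of Lemma~\ref{th:c'}. So there is no in-paper argument to compare against.

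That said, your proof is the standard local approach and is essentially correct. Two small points deserve one extra sentence each. In step~1, the equality $\hat L=\hat K_1\hat K_2$ is justified because $\hat K_1\hat K_2$ is a finite, hence complete, extension of $\hat K$ containing the dense subfield $L=K_1K_2$ of $\hat L$. In step~3, the phrase ``an Eisenstein equation $\pi_1^{e_1}=u\pi$ over $\hat K$'' is shorthand for the fact that a totally \emph{tamely} ramified extension of a complete discretely valued field is generated by an $e_1$-th root of a suitable uniformizer of the base; this is the first place $p\nmid e_1$ enters (via Hensel on a $1$-unit), the second being $p\nmid d$ so that $X^d-uu'$ is separable over the residue field. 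With these remarks, your chain
\[
e(\hat L/\hat K_2)=e\bigl(\hat K_2(\gamma,\pi_1)/\hat K_2(\gamma)\bigr)=e_1',\qquad
e(\hat L/\hat K)=e_1'\,e_2=\mathrm{lcm}(e_1,e_2)
\]
is fully justified.
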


\noindent{\bf Hypotheses and notation}. Let $K_1$ and $K_2$ be two galoisian extensions of $\SQ$. 

\medskip

\begin{tikzpicture}

\node[draw] (Q) at (5,0) {$\SQ$};
\node[draw] (K0) at (5,1.5) {$K_0$};
\node[draw] (K1) at (3,3) {$K_1$};
\node[draw] (K2) at (7,3) {$K_2$};
\node[draw] (L) at (5,4.5) {$L$};

\node (n0n1n2) at (12,2) {$K_0=K_1\cap K_2$};
\node (n1n2) at (12,4) {$L=K_1K_2$};

\node (n1) at (3.8,2.1) {$n_1$};
\node (n2) at (6.2,2.1) {$n_2$};
\node (n0) at (4.8,.8) {$n_0$};


\draw (Q) edge[] (K0);
\draw (K0) edge[] (K1);
\draw (K0) edge[] (K2);
\draw (K1) edge[] (L);
\draw (K2) edge[] (L);


\end{tikzpicture}

\begin{lemma}\label{th:c'}
Let $p$ be a prime number such that $p$ does not divide at least one of the ramification indices $e_p(K_i/K_0)$. Then, in the group $I_{\Oo_L}$, one has the following equality of subgroups:
\be\label{eq:1} \langle\,\Pi_{p^*}(K_1)\Oo_L,\Pi_{p^*}(K_2)\Oo_L\,\rangle \; = \; \langle\,\Pi_{p^*}(L)\,\rangle\,,\ee
and hence, in the group $\Cc l(L):$ 
\be \varepsilon_{K_1}^L(\langle\, \overline{\Pi_{p^*}(K_1)}\,\rangle) + \varepsilon_{K_2}^L(\langle\,\overline{\Pi_{p^*}(K_2)}\,\rangle)= \,\langle\,\overline{\Pi_{p^*}(L)}\,\rangle\,.  \ee
In particular, the ideal $\Pi_{p^*}(L)$ is principal if and only if the ideals $\Pi_{p^*}(K_1)\Oo_L$ and $\Pi_{p^*}(K_2)\Oo_L$ are themselves principal.
\end{lemma}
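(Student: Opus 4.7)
The plan is to first reduce the claimed ideal identity to an arithmetic statement about ramification indices in $L/K_i$, and then derive that statement from Abhyankar's lemma (Proposition~\ref{th:113}) applied over $K_0$.

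First I would express each $\Pi_{p^*}(K_i)\Oo_L$ as a power of $\Pi_{p^*}(L)$. Since $K_i/\SQ$ and $L/\SQ$ are galoisian, so is $L/K_i$; hence for every prime $\mm$ of $K_i$ above $p$ the primes of $L$ above $\mm$ share a common ramification index, and by the transitive action of $\Gal(L/\SQ)$ on primes of $L$ above $p$ this index depends only on $p$ and $i$. Denoting it $e_p(L/K_i)$, a direct count on both sides yields
\[ \Pi_{p^*}(K_i)\Oo_L = \Pi_{p^*}(L)^{e_p(L/K_i)} \quad (i=1,2). \]
Consequently the subgroup on the left-hand side of~(\ref{eq:1}) is the cyclic subgroup of $\Ii_L$ generated by $\Pi_{p^*}(L)^d$ with $d=\gcd(e_p(L/K_1),e_p(L/K_2))$, so (\ref{eq:1}) reduces to showing $d=1$.

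To verify $d=1$, I would apply Proposition~\ref{th:113} over $K_0$. Fix a prime $\qq$ of $L$ above $p$ and set $\pp_i=\qq\cap K_i$ for $i=0,1,2$; the galoisian hypotheses give $e(\pp_i/\pp_0)=e_p(K_i/K_0)$ and $e(\qq/\pp_0)=e_p(L/K_0)$, and the assumption on $p$ is exactly the divisibility hypothesis of Abhyankar's lemma (the residue field being finite, hence perfect). Its conclusion $e_p(L/K_0)=\mathrm{lcm}(e_p(K_1/K_0),e_p(K_2/K_0))$, combined with multiplicativity of ramification indices in the tower $K_0\subseteq K_i\subseteq L$, gives, with $a=e_p(K_1/K_0)$ and $b=e_p(K_2/K_0)$, $e_p(L/K_1)=b/\gcd(a,b)$ and $e_p(L/K_2)=a/\gcd(a,b)$, whose gcd is $1$.

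The class-group identity then follows by passing~(\ref{eq:1}) to the quotient by $\Pp_L$, and the characterization of when $\Pi_{p^*}(L)$ is principal is immediate: one direction uses that a power of a principal ideal is principal, the other that if both $\Pi_{p^*}(K_i)\Oo_L$ are principal then every element of the subgroup they generate — and hence $\Pi_{p^*}(L)$ itself, by~(\ref{eq:1}) — is principal. I do not foresee any serious obstacle; the only step requiring care is the bookkeeping in the first paragraph, where the galoisian hypothesis is used twice to ensure that $e_p(L/K_i)$ is a well-defined invariant, after which Abhyankar's lemma closes the argument by a routine gcd computation.
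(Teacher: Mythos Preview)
Your argument is correct and is essentially the same as the paper's: both express $\Pi_{p^*}(K_i)\Oo_L=\Pi_{p^*}(L)^{m/e_i}$ with $m=\mathrm{lcm}(e_1,e_2)$ obtained from Abhyankar's lemma, and then use the coprimality of $m/e_1$ and $m/e_2$ (equivalently, a B\'ezout relation) to recover $\Pi_{p^*}(L)$ from $\Pi_{p^*}(K_1)\Oo_L$ and $\Pi_{p^*}(K_2)\Oo_L$. You are a bit more explicit than the paper about why $e_p(L/K_i)$ is well defined and about the final gcd computation, but the route is the same.
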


\begin{proof}
Clearly, $L$ and $K_0$ are galoisian extensions of $\SQ$. Note that $K_0$ could be replaced by any field contained in $K_1\cap K_2$ provided that $K_0$ is a galoisian extension of $\SQ$.  
  Let  $e_i=e_p(K_i/K_0) \; (i=1,2)$ and $m=\mathrm{lcm}(e_1,e_2)$. Following Abhyankar's lemma, $e_p(L/K)=m$. Let 
$$\pi=\Pi_{p^*}(K_0)\,,\;\Pi_i=\Pi_{p^*}(K_i)\;(i=1,2)\textrm{ and }\Pi=\Pi_{p^*}(L)\,.$$ 
Then, 
\be\label{eq:2}\pi\Oo_{K_i}=\Pi_i^{e_i},\;\pi\Oo_L=\Pi^{m},\;\Pi_i\Oo_L=\Pi^{\frac{m}{e_i}}\;(i=1,2).\ee
Let $u_1$ and $u_2\in\SZ$ be such that $u_1\times \frac{m}{e_1}+u_2\times \frac{m}{e_2}=1 $. Then, one has:
\be\label{eq:3} \Pi_1^{u_1}\Pi_2^{u_2}\Oo_L =\Pi^{u_1\frac{m}{e_1}+u_2\frac{m}{e_2}}=\Pi  .\ee
Clearly, egality (\ref{eq:1}) follows from equalities (\ref{eq:2}) and (\ref{eq:3}).
\end{proof}

\begin{lemma}\label{th:117bis}
Under the hypothesis of lemma \ref{th:c'}, 	the equality
$$j_{K_1}^L(\langle\,\Pi_{p^*}(K_1)\,\rangle)\cap j_{K_2}^L(\langle\,\Pi_{p^*}(K_2)\,\rangle) = j_K^L(\langle\,\Pi_{p^*}(K_0)\,\rangle)$$
holds if and only if $e(\pp_1/\pp)$ and $e(\pp_2/\pp)$ are coprime
\end{lemma}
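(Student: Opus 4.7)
The plan is to translate the claim into an elementary computation inside the cyclic subgroup of $\Ii_L$ generated by $\Pi := \Pi_{p^*}(L)$. By the equalities (\ref{eq:2}) from Lemma~\ref{th:c'}, setting $e_i=e_p(K_i/K_0)$ and $m=\mathrm{lcm}(e_1,e_2)$, one has
\[
j_{K_i}^L(\Pi_{p^*}(K_i))=\Pi_{p^*}(K_i)\Oo_L=\Pi^{m/e_i}\quad(i=1,2),\qquad j_{K_0}^L(\Pi_{p^*}(K_0))=\Pi^m.
\]
In particular, the three cyclic subgroups appearing in the statement are all contained in $\langle\Pi\rangle$. Since $\Ii_L$ is the free abelian group on the maximal ideals of $\Oo_L$, the assignment $\Pi^k\mapsto k$ identifies $\langle\Pi\rangle$ with $\SZ$ (assuming $\Pi\neq\Oo_L$; otherwise all three subgroups are trivial and the equivalence is vacuous), and the three subgroups correspond respectively to $(m/e_1)\SZ$, $(m/e_2)\SZ$ and $m\SZ$.

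Next I would invoke the standard fact that $a\SZ\cap b\SZ=\mathrm{lcm}(a,b)\SZ$, reducing the question to the arithmetic equality $\mathrm{lcm}(m/e_1,m/e_2)=m$. Writing $d=\gcd(e_1,e_2)$ and $e_i=d\,a_i$ with $\gcd(a_1,a_2)=1$, one has $m=d\,a_1 a_2$, so $m/e_1=a_2$ and $m/e_2=a_1$; then $\mathrm{lcm}(a_1,a_2)=a_1 a_2=m/d$. Consequently the intersection equals $\langle \Pi^{m/d}\rangle$, which coincides with $\langle \Pi^m\rangle$ if and only if $d=1$, i.e., $\gcd(e_1,e_2)=1$. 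Because $K_i/K_0$ is galoisian (both $K_0$ and $K_i$ being galoisian over $\SQ$), all primes of $K_i$ above $\pp$ share the common ramification index $e_i=e(\pp_i/\pp)$, so this is exactly the condition stated in the lemma.

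The whole argument is essentially bookkeeping; the substantive work has already been done in Lemma~\ref{th:c'} via Abhyankar's lemma, which gives the exponent formulas above. Accordingly I do not expect any real obstacle. The only minor point worth verifying is that the cyclic subgroups $j_{K_i}^L(\langle\Pi_{p^*}(K_i)\rangle)$ are genuinely cyclic inside the free abelian group $\Ii_L$, so that the intersection can be computed as in $\SZ$ and no extraneous elements appear from other primes---this is immediate from the fact that $j_{K_i}^L(\Pi_{p^*}(K_i))$ is a power of the single element $\Pi$.
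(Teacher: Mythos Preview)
Your argument is correct and follows essentially the same route as the paper: both reduce to computing the intersection of the cyclic subgroups $\langle\Pi^{m/e_1}\rangle$ and $\langle\Pi^{m/e_2}\rangle$ inside $\langle\Pi\rangle\cong\SZ$, obtain $\langle\Pi^{m/d}\rangle$ with $d=\gcd(e_1,e_2)$, and compare with $\langle\Pi^m\rangle$. Your write-up is simply a bit more explicit about the identification with $\SZ$ and the $\mathrm{lcm}$ computation than the paper's terser version.
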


\begin{proof}
Clearly $j_{K_0}^L(\Pi_{p^*}(K_0))\subseteq j_{K_1}^L(\Pi_{p^*}(K_1))\cap j_{K_2}^L(\Pi_{p^*}(K_2))$. Do we have an equality?
The ideals $I$ belonging to this intersection are of the form $I=\Pi_i^{k_i}\Oo_L$ where $k_i\in\SZ$ ($i=1,2$), thus of the form $I=\Pi^{k_i\frac{m}{e_i}}$ ($i=1,2$). Consequently, $I=\Pi^\alpha$ where $\alpha$ is divisible by $\frac{m}{e_1}$ and $\frac{m}{e_2},$ that is, multiple of 
$\mathrm{lcm}(\frac{m}{e_1},\frac{m}{e_2})$, namely, multiple of $\frac{m}{d}$ where $d=\gcd(e_1,e_2)$. Letting $k_i=\frac{e_i}{d}$, one has $I=\Pi^{\frac{m}{d}}$, while $\pi\Oo_L=\Pi^m$.  
\end{proof}

\noindent By `globalization', Lemmas~\ref{th:c'} and \ref{th:117bis} lead to both following propositions.

\begin{proposition}\label{th:117b}
Let $K_1$ and $K_2$ be two finite galoisian extensions of $\SQ$. Let  $K_0=K_1\cap K_2$, $L=K_1K_2$, $G=\Gal(L/\SQ)$ and $G_i=\Gal(K_i/\SQ)$ for $0\leq i\leq 2$. If $([K_1:K_0],[K_2:K_0])=1$, then,
$$j_{K_1}^L(\Ii_{K_1}^{G_1})\cdot j_{K_2}^L(\Ii_{K_2}^{G_2})=\Ii_L^G \;\;\textrm{ and }\;\; j_{K_1}^L(\Ii _{K_1}^{G_1})\cap j_{k_2}^L(\Ii_{K_2}^{G_2})=j_{K_0}^L(\Ii_{K_0}^{G_0})\,.$$
Thus, $\Ii_L^G$ is isomorphic to  the amalgamated sum of $\Ii_{K_1}^{G_1}$ and $\Ii_{K_2}^{G_2}$ above $\Ii_{K_0}^{G_0}:$ 
$$\Ii_L^G\simeq \;\Ii_{K_1}^{G_1}\;*_{\Ii_{K_0}^{G_0}}\;\Ii_{K_2}^{G_2} $$
with respect to the morphisms $j$.
\end{proposition}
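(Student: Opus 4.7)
The plan is to reduce both global equalities to their local counterparts at each rational prime $p$---namely, Lemmas~\ref{th:c'} and~\ref{th:117bis}---and then to glue them, using the fact that, for any finite galoisian extension $F/\SQ$ with group $H$, the group $\Ii_F^H$ of ambiguous ideals is the free abelian group on the ideals $\Pi_{p^*}(F)$ as $p$ runs over rational primes.

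First, I would check the hypotheses of both local lemmas at every prime $p$. Since $K_i/K_0$ is galoisian, the ramification index $e_p(K_i/K_0)$ divides $[K_i:K_0]$, so the assumption $([K_1:K_0],[K_2:K_0])=1$ forces $\gcd(e_p(K_1/K_0),e_p(K_2/K_0))=1$ for every $p$. This is the strong coprimality required by Lemma~\ref{th:117bis} and, a fortiori, provides the weaker hypothesis of Lemma~\ref{th:c'}. I would then record the $p$-decompositions
$$\Ii_L^G=\bigoplus_{p}\langle\Pi_{p^*}(L)\rangle,\qquad j_{K_i}^L(\Ii_{K_i}^{G_i})=\bigoplus_{p}\langle\Pi_{p^*}(K_i)\Oo_L\rangle,$$
where, via the identity $\Pi_{p^*}(K_i)\Oo_L=\Pi_{p^*}(L)^{m/e_i(p)}$ of~(\ref{eq:2}), each $p$-summand on the right is a cyclic subgroup of the corresponding $p$-summand on the left. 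Since sums and intersections of such diagonally embedded subgroups of a direct sum are computed termwise, the two global identities reduce to their single-prime versions, which are precisely the conclusions of Lemmas~\ref{th:c'} and~\ref{th:117bis}.

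Summing the local equalities over $p$ thus yields both displayed identities. The amalgamated sum isomorphism is then a formal consequence of the universal property of pushouts in the category of abelian groups: whenever $A$ and $B$ are subgroups of an abelian group $X$ with $A+B=X$ and $A\cap B=C$, the natural map $A*_C B\to X$ is an isomorphism, and here the structure maps are the restrictions of $j_{K_1}^L$ and $j_{K_2}^L$. The only genuinely non-formal step---and the one I would check with care---is the termwise decomposition displayed above, i.e.\ that $j_{K_i}^L$ sends the cyclic summand indexed by $p$ into the single $p$-summand $\langle\Pi_{p^*}(L)\rangle$ of $\Ii_L^G$, with no contribution coming from other primes; this is exactly what formula~(\ref{eq:2}) records, so the coprimality of the degrees is effectively used only through Abhyankar's lemma inside the proofs of the two local lemmas, not at the gluing step itself.
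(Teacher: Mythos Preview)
Your proposal is correct and follows exactly the paper's approach: the paper's proof consists of the single sentence that the result is a consequence of the $\Ii_{K_i}^{G_i}$ being free abelian on the $\Pi_{p^*}(K_i)$'s, which (together with the preamble ``by globalization, Lemmas~\ref{th:c'} and~\ref{th:117bis} lead to\ldots'') is precisely the termwise reduction you have spelled out in detail. Your explicit verification that $([K_1:K_0],[K_2:K_0])=1$ forces the coprimality of the ramification indices---and hence the hypotheses of both local lemmas at every prime---is a useful elaboration the paper leaves implicit.
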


\begin{proof}
This is a consequence of the fact that the groups $\Ii_{K_i}^{G_i}$ are free abelian groups with bases formed by the $\Pi_{p^*}(K_i)$'s.
\end{proof}

\begin{proposition}\label{th:213}
Let $K_1$ and $K_2$ be two finite galoisian extensions of $\SQ$. Let $K_0=K_1\cap K_2$ and $L=K_1K_2$. If, for each prime number $p$, at least one extension $K_i/K_0 \; (i=1,2)$ is tamely ramified with respect to  $p$, then
$$\Pp o(L)=\varepsilon_{K_1}^L(\Pp o(K_1)) + \varepsilon_{K_2}^L(\Pp o(K_2))\,. $$
\end{proposition}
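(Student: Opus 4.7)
The plan is to mimic the globalization used for Proposition~\ref{th:117b}, upgrading Lemma~\ref{th:c'} from a prime-by-prime statement to a global one in the class group $\Cc l(L)$. The inclusion $\varepsilon_{K_1}^L(\Pp o(K_1))+\varepsilon_{K_2}^L(\Pp o(K_2))\subseteq \Pp o(L)$ is immediate from the lemma recalled in Section~\ref{sec:2} asserting that $\varepsilon_K^L(\Pp o(K))\subseteq \Pp o(L)$ whenever $K\subseteq L$ are galoisian extensions of $\SQ$; applied to $K=K_1$ and $K=K_2$, this gives both summands on the right-hand side.

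For the reverse inclusion, I would appeal to the corollary of Ostrowski's Proposition~\ref{th:12}: the P\'olya group $\Pp o(L)$ is generated by the classes $\overline{\Pi_{p^*}(L)}$ as $p$ runs through the primes ramified in $L/\SQ$. Fix such a prime $p$. The tameness hypothesis amounts to the statement that $p \nmid e_p(K_i/K_0)$ for at least one $i\in\{1,2\}$, which is precisely the condition under which Lemma~\ref{th:c'} applies. That lemma then yields
\[
\langle\,\overline{\Pi_{p^*}(L)}\,\rangle \;=\; \varepsilon_{K_1}^L(\langle\,\overline{\Pi_{p^*}(K_1)}\,\rangle) \;+\; \varepsilon_{K_2}^L(\langle\,\overline{\Pi_{p^*}(K_2)}\,\rangle),
\]
and each summand on the right sits inside $\varepsilon_{K_i}^L(\Pp o(K_i))$. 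Summing over all ramified $p$ yields the desired containment of the generators of $\Pp o(L)$, and hence of $\Pp o(L)$ itself.

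I do not anticipate any serious obstacle: once Lemma~\ref{th:c'} is in hand, the argument is essentially a matter of unwinding definitions and applying it prime-by-prime. The only minor points requiring care are the translation of ``tamely ramified at $p$'' into the algebraic condition $p\nmid e_p(K_i/K_0)$ required by Lemma~\ref{th:c'}, and the observation that primes unramified in $L/\SQ$ give rise to principal $\Pi_{p^*}(L)=p\Oo_L$ and therefore contribute trivially, so the Ostrowski generating set may be restricted to the ramified primes to which Lemma~\ref{th:c'} is then applied.
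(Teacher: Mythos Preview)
Your proposal is correct and follows essentially the same route as the paper, which simply states that Proposition~\ref{th:213} is obtained by ``globalization'' of Lemma~\ref{th:c'}. Your reduction to ramified primes via Ostrowski's corollary is a harmless extra step, since the tameness hypothesis ensures Lemma~\ref{th:c'} applies to every prime anyway.
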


\begin{example}
Let $K_1=\SQ(\sqrt{d})$ and $K_2=\SQ(\sqrt{d'})$. If $d\equiv 1\pmod{4}$, then
$$\Pp o(\SQ(\sqrt{d},\sqrt{d'}))=\varepsilon_{K_1}^L(\Pp o(K_1)) + \varepsilon_{K_2}^L(\Pp o(K_2))\,. $$
\end{example}

Proposition~\ref{th:213} implies as a particular case assertion (a) of Zantema's Theorem~\ref{th:21}. Both following theorems describe the group $\Pp o(L)$ by generalizing assertion (b) of Theorem~\ref{th:21}.

\begin{theorem}\label{th:24}
Let $K_1$ and $K_2$ be two finite galoisian extensions of $\SQ$. Let $K_0=K_1\cap K_2$ and  $L=K_1K_2$. Assume that:
\begin{enumerate}
\item[(a)] $K_0$ is a P\'olya field,
\item[(b)] $([K_1:K_0],[K_2:K_0])=1$.
\end{enumerate}
Then, 
$$\Pp o(L)\simeq \Pp o(K_1)\oplus  \Pp o(K_2)\,.$$
More precisely, 
$$\Pp o(L)= \varepsilon_{K_1}^L(\,\Pp o(K_1)\,)\oplus  \varepsilon_{K_2}^L(\,\Pp o(K_2)\,)\,. $$
\end{theorem}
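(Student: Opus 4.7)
The plan is to combine Proposition~\ref{th:213} (which will give $\Pp o(L)$ as a sum of the two images) with the coprimality hypothesis (which will force that sum to be direct), together with an injectivity argument drawn from the proof of Proposition~\ref{cor:26}. Set $n_i = [K_i:K_0]$; since $K_1, K_2$ are Galois over $\SQ$ with $K_0 = K_1\cap K_2$, standard Galois theory gives $[L:K_1]=n_2$ and $[L:K_2]=n_1$.

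First I would verify that Proposition~\ref{th:213} applies. For every prime $p$, the ramification index $e_p(K_i/K_0)$ divides $n_i$; since $\gcd(n_1,n_2)=1$, at most one of $e_p(K_1/K_0)$, $e_p(K_2/K_0)$ is divisible by $p$, so at least one extension $K_i/K_0$ is tamely ramified at $p$. Proposition~\ref{th:213} then yields
$$\Pp o(L)=\varepsilon_{K_1}^L(\Pp o(K_1))+\varepsilon_{K_2}^L(\Pp o(K_2)).$$

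Next I would establish that $\varepsilon_{K_i}^L$ is injective on $\Pp o(K_i)$ for $i=1,2$. Using hypothesis (a), the relation $\Pi_{p^*}(K_0)\Oo_{K_i}=\Pi_{p^*}(K_i)^{e_p(K_i/K_0)}$ shows that the class $\overline{\Pi_{p^*}(K_i)}$ has order dividing $e_p(K_i/K_0)$, and hence dividing $n_i$. By Ostrowski's Proposition~\ref{th:12} these classes generate $\Pp o(K_i)$, so $\Pp o(K_i)$ has exponent dividing $n_i$. The composition $\nu_L^{K_i}\circ\varepsilon_{K_i}^L$ on $\Pp o(K_i)$ is the $[L:K_i]$-th power map: since $[L:K_1]=n_2$ is coprime to $n_1$ and $[L:K_2]=n_1$ is coprime to $n_2$, this composition is an automorphism of $\Pp o(K_i)$, and in particular $\varepsilon_{K_i}^L$ is injective there.

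Finally, the sum is direct: $\varepsilon_{K_1}^L(\Pp o(K_1))$ has exponent dividing $n_1$ while $\varepsilon_{K_2}^L(\Pp o(K_2))$ has exponent dividing $n_2$, so their intersection has exponent dividing $\gcd(n_1,n_2)=1$ and is trivial. Combining everything,
$$\Pp o(L)=\varepsilon_{K_1}^L(\Pp o(K_1))\oplus\varepsilon_{K_2}^L(\Pp o(K_2))\simeq \Pp o(K_1)\oplus\Pp o(K_2).$$
There is no real obstacle here; the main conceptual step is the observation that coprimality of the degrees automatically implies that at most one of the two extensions can be wildly ramified at any given prime, which is precisely what allows Proposition~\ref{th:213} to be applied without a separate tameness assumption.
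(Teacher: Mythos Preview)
Your proof is correct and follows essentially the same approach as the paper's: invoke Proposition~\ref{th:213} to write $\Pp o(L)$ as the sum of the two images, then use that the exponent of $\Pp o(K_i)$ divides $n_i=[K_i:K_0]$ (thanks to hypothesis~(a)) together with $\gcd(n_1,n_2)=1$ to conclude that the sum is direct and that each $\varepsilon_{K_i}^L$ is injective on $\Pp o(K_i)$. You are simply more explicit than the paper in checking that hypothesis~(b) automatically yields the tameness condition required by Proposition~\ref{th:213} and in spelling out the injectivity via $\nu_L^{K_i}\circ\varepsilon_{K_i}^L$, but the argument is the same.
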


\begin{proof}$\empty$
This is a consequence of Proposition~\ref{th:213} and of the fact that the intersection of the two subgroups $\varepsilon_{K_1}^L(\Pp o(K_1))$ and  $\varepsilon_{K_2}^L(\Pp o(K_2))$ in $\Pp o(L)$ is $\{1\}$. Indeed, the order of every element of $\Pp o(K_i)$ divides $[K_i:K_0]$ ($i=1,2$) and, by hypothesis, these degrees are coprime.
\end{proof}

\begin{example}
Let $K_1=\SQ(i,j)$ and $K_2=\SQ(j,\sqrt[3]{7})$. Then, $\Pp o(K_1)=\{1\}$~\cite[Thm. 5.1]{bib:leriche2} and $\Pp o(K_2)=\Cc l(K_2)\simeq \SZ/3\SZ$~\cite[Ex. 6.8]{bib:leriche2}. As $K_0=\SQ(j)$ is a P\'olya field, we obtain: $\Pp o(\SQ(i,j,\sqrt[3]{7}))\simeq \SZ/3\SZ$.
\end{example}

\begin{corollary}
Assume $K_0$ is a P\'olya field and that $L/K_0$ is a galoisian extension whose Galois group is of the form $\prod_{i\in I}G_i$ where, for $i\not=j$, $(\vert G_i\vert,\vert G_j\vert)=1.$ Then,
$$\Pp o(L)\simeq \oplus_{i\in I}\,\Pp o(K_i)$$
where $K_i$ denotes the subfield of $L$ fixed by all the $G_j$ where $j\not=i$.	
\end{corollary}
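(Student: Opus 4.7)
The plan is to argue by induction on $n=\vert I\vert$; since $\Pp o(L)\subseteq\Cc l(L)$ is finite, only finitely many of the factors $\Pp o(K_i)$ can be nontrivial and we may assume $I=\{1,\dots,n\}$ finite. The base case $n=1$ is trivial, and the case $n=2$ is exactly Theorem~\ref{th:24}: the hypotheses $K_0$ P\'olya and $([K_1:K_0],[K_2:K_0])=(\vert G_1\vert,\vert G_2\vert)=1$ are immediate from the assumption, and $L=K_1K_2$ with $K_1\cap K_2=K_0$ follows from the product decomposition of the Galois group.

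For the induction step, I would split off the last factor. Set $L'$ equal to the subfield of $L$ fixed by $\{1\}\times\cdots\times\{1\}\times G_n$; then $\Gal(L'/K_0)\simeq \prod_{i<n}G_i$ still has pairwise coprime factor orders, so the induction hypothesis applies and yields $\Pp o(L')\simeq\oplus_{i<n}\Pp o(K_i)$. Moreover, from the product structure, $L'\cap K_n=K_0$, $L'K_n=L$, and $([L':K_0],[K_n:K_0])=(\prod_{i<n}\vert G_i\vert,\vert G_n\vert)=1$. Applying Theorem~\ref{th:24} to the pair $(L',K_n)$ over the P\'olya field $K_0$ gives $\Pp o(L)\simeq \Pp o(L')\oplus \Pp o(K_n)$, and combining with the inductive isomorphism yields the desired decomposition.

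The main technical obstacle is ensuring that each intermediate field to which we apply Theorem~\ref{th:24} is galoisian over $\SQ$ (the standing hypothesis of that theorem). Implicitly the whole setup takes place inside a Galois extension $L/\SQ$ with $K_0/\SQ$ also Galois. The key observation then is that the coprimality of the orders $\vert G_i\vert$ makes each partial product $\prod_{j\neq i}G_j$ the \emph{unique} subgroup of $\Gal(L/K_0)$ of its order, hence characteristic in $\Gal(L/K_0)$; since $\Gal(L/K_0)$ is itself normal in $\Gal(L/\SQ)$, this characteristic subgroup is normal in $\Gal(L/\SQ)$. Consequently each $K_i/\SQ$ is galoisian, and the same argument applied to $\prod_{i\geq n}G_i$ shows that $L'/\SQ$ is galoisian, so Theorem~\ref{th:24} indeed applies at every stage of the induction.
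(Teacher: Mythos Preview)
Your proof is correct and follows the intended route: the paper states this corollary immediately after Theorem~\ref{th:24} without proof, the implicit argument being precisely the induction on $\vert I\vert$ that you carry out. Your final paragraph, verifying that each $K_i$ (and each intermediate $L'$) is galoisian over $\SQ$ via the characteristicity of $\prod_{j\neq i}G_j$ in $\Gal(L/K_0)$, is a genuine point that the paper leaves tacit and is worth making explicit.
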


As a particular case, we have:

\begin{corollary}\label{cor:22}
Assume that $K_0$ is a P\'olya field and that $L/K_0$ is an abelian extension of degree $n=\prod_{p\in\SP}p^{\nu(p)}$. Then,
$$\Pp o(L)\simeq \oplus_{p\vert n}\Pp o(K_p)$$
where, for each $p$ dividing $n$, $K_p$ denotes the subfield of $L$ of degree $p^{\nu(p)}$ on $K_0$.	
\end{corollary}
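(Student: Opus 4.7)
The plan is to view this corollary as an immediate specialization of the preceding one: the only input needed is the primary decomposition of a finite abelian group.

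Since $\Gal(L/K_0)$ is a finite abelian group of order $n = \prod_{p\mid n} p^{\nu(p)}$, it splits canonically as the internal direct product of its Sylow subgroups,
$$\Gal(L/K_0) \;=\; \prod_{p \mid n} S_p, \qquad |S_p| = p^{\nu(p)}.$$
The orders $|S_p|$ for distinct primes $p$ are tautologically coprime, so the coprimality hypothesis of the preceding corollary is satisfied. Under the Galois correspondence, the subfield of $L$ fixed by $\prod_{q \neq p} S_q$ has degree $|S_p| = p^{\nu(p)}$ over $K_0$; this identifies it with the field $K_p$ of the statement. Applying the preceding corollary to the family $(S_p)_{p\mid n}$ then yields
$$\Pp o(L) \;\simeq\; \bigoplus_{p \mid n} \Pp o(K_p),$$
which is the desired conclusion.

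The one bookkeeping point I would make explicit is that each $K_p$ must itself be galoisian over $\SQ$ in order to fit the framework of Section~\ref{sec:3} (in particular of Theorem~\ref{th:24} on which the preceding corollary rests). This is automatic: in an abelian group the Sylow subgroups are unique, hence characteristic, so they are stable under the conjugation action of $\Gal(L/\SQ)$ on its normal subgroup $\Gal(L/K_0)$. Consequently each $\prod_{q\neq p} S_q$ is normal in $\Gal(L/\SQ)$, and its fixed field $K_p$ is a galoisian extension of $\SQ$ containing $K_0$.

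I do not anticipate any serious obstacle here: the entire content of the corollary is that the primary decomposition of a finite abelian group gives a decomposition into pairwise coprime-order factors, at which point the preceding corollary does all the work. If anything were to require care, it would be the verification that $K_p/\SQ$ is galoisian, which is handled above by the characteristic-Sylow remark.
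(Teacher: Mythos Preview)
Your proposal is correct and follows exactly the paper's approach: the paper simply introduces this corollary with ``As a particular case, we have:'' after the preceding corollary, and your argument makes explicit precisely that specialization via the Sylow decomposition of the abelian group $\Gal(L/K_0)$. Your added verification that each $K_p$ is galoisian over $\SQ$ (via the characteristic-Sylow remark) is a point the paper leaves implicit in the ambient hypotheses of Section~\ref{sec:3}, and is a welcome clarification.
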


As a particular case, we obtain \cite[Zantema, Cor. 3.5]{bib:zantema}. 

\begin{theorem}\label{th:241}
Let $K_1$ and $K_2$ be two finite galoisian extensions of $\SQ$. Let $K_0=K_1\cap K_2$ and $L=K_1K_2$. If the degrees $[K_1:K_0]$, $[K_2:K_0]$ and $[K_0:\SQ]$ are pairwise relatively prime, then
$$\Pp o(L)\simeq \Pp o(K_0) \oplus \left(\,\Pp o(K_1)\,/\,\varepsilon_{K_0}^{K_1}(\Pp o(K_0))\,\right)\oplus  \left(\, \Pp o(K_2)\,/\,\varepsilon_{K_0}^{K_2}(\Pp o(K_0))\,\right)\,. $$
\end{theorem}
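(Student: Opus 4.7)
The plan is to iterate the decompositions already available: Proposition~\ref{cor:26} will peel off the summand $\Pp o(K_0)$, Proposition~\ref{th:213} will write the remainder as a sum of two pieces coming from $K_1$ and $K_2$, and Corollary~\ref{cor:26a} will identify those pieces. Throughout, write $n_0=[K_0:\SQ]$ and $n_i=[K_i:K_0]$ for $i=1,2$; by hypothesis $n_0,n_1,n_2$ are pairwise coprime.

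First I would apply Proposition~\ref{cor:26} to the tower $\SQ\subseteq K_0\subseteq L$. Since $\SQ$ is trivially a P\'olya field and $([L:K_0],[K_0:\SQ])=(n_1n_2,n_0)=1$, the proposition produces a direct-sum splitting $\Pp o(L)=\pi_0\oplus Q$, where $\pi_0:=\varepsilon_{K_0}^L(\Pp o(K_0))\simeq\Pp o(K_0)$ and $Q\simeq\Pp o(L)/\pi_0$ has exponent dividing $n_1n_2$. This delivers the first summand in the claimed decomposition.

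Next I would split $Q$ itself. Because $(n_1,n_2)=1$, every rational prime $p$ is tamely ramified in at least one of $K_1/K_0$, $K_2/K_0$, so Proposition~\ref{th:213} applies and yields $\Pp o(L)=B_1+B_2$ with $B_i:=\varepsilon_{K_i}^L(\Pp o(K_i))$. Since $\varepsilon_{K_0}^L=\varepsilon_{K_i}^L\circ\varepsilon_{K_0}^{K_i}$, the subgroup $\pi_0$ sits inside each $B_i$; denoting by $\overline{B_i}$ the image of $B_i$ in $Q$, one obtains $Q=\overline{B_1}+\overline{B_2}$. The local identity $\Pi_{p^*}(K_i)^{e_p(K_i/K_0)}=\Pi_{p^*}(K_0)\Oo_{K_i}$ (already used in Lemma~\ref{le:24}), pushed up to $L$ and reduced modulo $\pi_0$, shows that $\varepsilon_{K_i}^L(\overline{\Pi_{p^*}(K_i)})^{e_p(K_i/K_0)}$ is trivial in $Q$; since $e_p(K_i/K_0)\mid n_i$, the subgroup $\overline{B_i}$ has exponent dividing $n_i$, and the coprimality of $n_1$ and $n_2$ then forces $\overline{B_1}\cap\overline{B_2}=0$, whence $Q=\overline{B_1}\oplus\overline{B_2}$.

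Finally I would identify each $\overline{B_i}$ with $\Pp o(K_i)/\varepsilon_{K_0}^{K_i}(\Pp o(K_0))$. Applying Corollary~\ref{cor:26a} to the tower $K_i\subseteq L$, whose degrees satisfy $([L:K_i],[K_i:\SQ])=(n_j,n_0n_i)=1$ with $\{i,j\}=\{1,2\}$, shows that $\varepsilon_{K_i}^L$ restricts to an injection on $\Pp o(K_i)$. Hence $B_i\simeq\Pp o(K_i)$ and $\overline{B_i}=B_i/\pi_0\simeq\Pp o(K_i)/\varepsilon_{K_0}^{K_i}(\Pp o(K_0))$, which assembles to the statement. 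The only real obstacle is bookkeeping the torsion exponents of the three summands; the pairwise coprimality of $n_0,n_1,n_2$ ensures that they occupy disjoint primary components of $\Pp o(L)$, making every intersection in sight trivial.
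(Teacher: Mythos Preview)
Your argument is correct and uses the same two ingredients as the paper's proof, namely Proposition~\ref{cor:26} (and its Corollary~\ref{cor:26a}) together with Proposition~\ref{th:213}, and the same coprimality-of-exponents observation to force the sums to be direct. The only cosmetic difference is the order of the decomposition: the paper first applies Proposition~\ref{cor:26} to each tower $\SQ\subseteq K_0\subseteq K_i$ to write $\Pp o(K_i)=\varepsilon_{K_0}^{K_i}(\Pp o(K_0))\oplus W_i$, pushes everything into $\Pp o(L)$ via Proposition~\ref{th:213}, and then notes that $\varepsilon_{K_0}^L(\Pp o(K_0))$, $\varepsilon_{K_1}^L(W_1)$, $\varepsilon_{K_2}^L(W_2)$ have exponents dividing $n_0,n_1,n_2$ respectively, whence their sum is direct; you instead split off $\pi_0$ from $\Pp o(L)$ first and then decompose the quotient. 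Either way the bookkeeping is the same, and your explicit appeal to Corollary~\ref{cor:26a} for the injectivity of $\varepsilon_{K_i}^L$ makes transparent a step the paper leaves implicit.
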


\begin{proof}
This is a consequence of Propositions~\ref{cor:26} and \ref{th:213}. Indeed, for $i=1,2$, one has the canonical injective morphisms :
$$\varepsilon_{K_0}^{K_i}: \Pp o(K_0)\to \Pp o(K_i)=\varepsilon_{K_0}^{K_i}(\Pp o(K_0))\oplus W_i \;\textrm{where }\; W_i\simeq \Pp o(K_i)/\varepsilon_{K_0}^{K_i}(\Pp o(K_0))\,.$$
Moreover,
$$\Pp o(L)= \varepsilon_{K_1}^L(\Pp o(K_1)+\varepsilon_{K_2}^L(\Pp o(K_2))\,.$$
Thus,
$$\Pp o(L)=\varepsilon_{K_0}^L(\Pp o(K_0))+\varepsilon_{K_1}^L(W_1)+\varepsilon_{K_2}^L(W_2)  $$
and this sum is direct since the orders of the elements of the three subgroups are pairwise relatively prime.
\end{proof}


\section{The relative P\'olya group of an extension}\label{sec:4}

We introduce the following notation for every finite extension $L/K$ of number fields: 
\be \forall \pp\in\Max(\Oo_K) \; \forall f\in\SN^* \quad \Pi_{\pp^f}(L/K)=\prod_{\PP\in\Max(\Oo_L),\,N_{L/K}(\PP)=\pp^f}\,\PP\,.\ee
This is clearly a generalization of the ideals $\Pi_q(K)$'s since $\Pi_{p^f}(K)=\Pi_{(p\SZ)^f}(K/\SQ)$. Analogously, we generalize the notion of P\'olya group: 

\begin{definition}
The {\em relative P\'olya group} of an extension $L/K$ is the subgroup $\Pp o(L/K)$ of the class group of $L$ generated by the classes of the ideals $\Pi_{\pp^f}(L/K)$.  	
\end{definition}

Clearly, $\Pp o(K)=\Pp o(K/\SQ)$. Here, we are interested in galoisian extensions. 

\smallskip

\noindent{\bf Notation}. In the remaining of this section, {\em the extension $L/K$ is assumed to be galoisian with Galois group $H$}. We may forget the exponent $f$ and write:
$$\Pi_{\pp^*}(L/K)=\Pi_{\pp^{f_{\pp}(L/K)}}(L/K)=\prod_{\PP\in\Max(\Oo_L),\,\PP\vert\pp}\PP\,.$$ 
\noindent For every $\pp\in\Max(\Oo_K)$, one has $\pp\Oo_L=\Pi_{\pp^*}(L/K)^{e_{\pp}(L/K)}$. 
Clearly, the subgroup $\Ii_L^H$ of $\Ii_L$ formed by the invariant ideals is generated by the $\Pi_{\pp^*}(L/K)$'s:
$$\Ii_L^H=\langle\,\Pi_{\pp^*}(L/K)\mid \pp\in\Max(\Oo_K)\,\rangle\,.$$
In fact, $\Ii_L^H$ is the free abelian group with basis formed by the $\Pi_{\pp^*}(L/K)$'s.
Note that the relative P\'olya group $\Pp o(L/K)$ of a galoisian extension $L/K$ is nothing else than the subgroup $\Ii_L^H/\Pp_L^H$ of $\Cc l(L)$ formed by the strongly ambiguous classes with respect to the extension $L/K$.

\smallskip

\noindent Recall that Spickermann generalized the notion of P\'olya field in the following way:

\smallskip

\begin{definition}\cite{bib:spicker}
A galoisian extension of number fields $L/K$ with Galois group $H$ is said to be a {\em galoisian P\'olya extension} if $\Ii_L^H\subseteq \Ii_K\cdot \Pp_L$
\end{definition}

This is a generalization of the definition of  a P\'olya field $L$ only if $L$ is a galoisian extension of $\SQ$. Moreover, with respect to the notion of relative P\'olya group, to say that the extension $L/K$ is a galoisian P\'olya extension means that:
$$\Pp o(L/K)=\varepsilon_K^L(\Cc l(K))\,.$$

If $L/\SQ$ is galoisian, then  $\Pi_{p^*}(L)=\prod_{\pp\vert p}\Pi_{\pp^*}(L/K)$, and hence,
$$\Pp o(L/\SQ)=\Pp o(L)\subseteq \Pp o(L/K)\,.$$

\begin{remark}
There is another notion of P\'olya extension introduced in~\cite[\S\,II.9]{bib:chabert2006} and \cite{bib:leriche1}, that is more directly linked to the integer-valued polynomials and that is completely distinct from the previous definition: in these papers, an extension of number fields $L/K$ is said to be a P\'olya extension if $\varepsilon_K^L(\Pp o(K))=\{1\}$. 	
\end{remark}

We recall now some known results with respect to the group formed by the strongly ambiguous classes.

\begin{proposition}
Let $L/K$ be a galoisian extension with Galois group $H$. Then,
\be\label{eq:13cd} \vert\Pp o(L/K)\vert = h_K \times \frac{\prod_{\pp\in\Max(\Oo_K)}e_{\pp}(L/K)}{\vert H^1(H,\Oo_L^\times)\vert } \ee
\end{proposition}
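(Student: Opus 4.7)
The strategy is to compute $|\Pp o(L/K)|=[\Ii_L^H:\Pp_L^H]$ by inserting the intermediate subgroups $\Ii_K\Oo_L$ and $\Pp_K\Oo_L$, both of which sit naturally inside $\Ii_L^H$ thanks to the injectivity of $j_K^L$. Since $\Pp_K\Oo_L$ is contained in both $\Pp_L^H$ and $\Ii_K\Oo_L$, multiplicativity of indices yields
$$|\Pp o(L/K)|=\frac{[\Ii_L^H:\Ii_K\Oo_L]\cdot[\Ii_K\Oo_L:\Pp_K\Oo_L]}{[\Pp_L^H:\Pp_K\Oo_L]},$$
which is the identity I plan to fill in. Verifying that each of the three indices on the right is finite will be part of the work.

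The middle index is the easiest: $j_K^L$ restricts to an isomorphism $\Ii_K/\Pp_K\simeq \Ii_K\Oo_L/\Pp_K\Oo_L$, so $[\Ii_K\Oo_L:\Pp_K\Oo_L]=h_K$. For the top index I will use the description of $\Ii_L^H$ recalled just before the proposition, namely that it is the free abelian group on the ideals $\Pi_{\pp^*}(L/K)$. Because $j_K^L(\pp)=\pp\Oo_L=\Pi_{\pp^*}(L/K)^{e_\pp(L/K)}$, the cokernel of $j_K^L\colon\Ii_K\to\Ii_L^H$ splits as a direct sum $\bigoplus_{\pp}\SZ/e_\pp(L/K)\SZ$. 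This is finite since only finitely many primes ramify in $L/K$, and its order is $\prod_{\pp\in\Max(\Oo_K)} e_\pp(L/K)$.

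The main work, and the only conceptual step, is to identify $[\Pp_L^H:\Pp_K\Oo_L]$ with $|H^1(H,\Oo_L^\times)|$. I plan to construct a morphism $\varphi\colon\Pp_L^H\to H^1(H,\Oo_L^\times)$ sending an ambiguous principal ideal $(\alpha)$ to the class of the map $c_\alpha\colon\sigma\mapsto \sigma(\alpha)/\alpha$. The ambiguity condition $\sigma(\alpha)\Oo_L=\alpha\Oo_L$ ensures $c_\alpha$ takes values in $\Oo_L^\times$, and the cocycle identity is a direct computation; replacing $\alpha$ by $u\alpha$ with $u\in\Oo_L^\times$ alters $c_\alpha$ by a coboundary, so $\varphi$ is well defined on $\Pp_L^H$. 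An element lies in $\ker\varphi$ precisely when $\sigma(\alpha/u)=\alpha/u$ for all $\sigma$ and some unit $u$, that is, when $\alpha/u\in K^\times$, which gives $\ker\varphi=\Pp_K\Oo_L$. For surjectivity I will invoke Hilbert's Theorem 90: any cocycle with values in $\Oo_L^\times$, viewed in $L^\times$, is a coboundary, hence equal to $\sigma(\alpha)/\alpha$ for some $\alpha\in L^\times$; the unit-valued hypothesis then forces $(\alpha)$ to be an ambiguous principal ideal mapping to the prescribed class. Substituting the three indices into the displayed identity produces the claimed formula.
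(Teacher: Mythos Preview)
Your proof is correct and rests on the same two ingredients as the paper: the identification of $\Ii_L^H/j_K^L(\Ii_K)$ with $\bigoplus_{\pp}\SZ/e_{\pp}(L/K)\SZ$, and the identification of $\Pp_L^H/j_K^L(\Pp_K)$ with $H^1(H,\Oo_L^\times)$ via the connecting map $(\alpha)\mapsto[\sigma\mapsto\sigma(\alpha)/\alpha]$ together with Hilbert~90. The paper states these as two short exact sequences, stacks them into a commutative diagram, and applies the snake lemma to obtain a four-term exact sequence
\[
1\to\ker(\tau_{L/K})\to\Cc l(K)\to\Pp o(L/K)\to\mathrm{Coker}(\tau_{L/K})\to 1
\]
from which the order formula is read off. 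You bypass the snake lemma entirely and combine the two index computations directly via the elementary identity $[\Ii_L^H:\Pp_L^H]\cdot[\Pp_L^H:\Pp_K\Oo_L]=[\Ii_L^H:\Ii_K\Oo_L]\cdot[\Ii_K\Oo_L:\Pp_K\Oo_L]$. Your route is slightly more elementary and self-contained; the paper's route, however, produces the exact sequence above as a by-product, and this sequence (together with the map $\tau_{L/K}$) is reused in the subsequent corollaries and in the remark at the end of the section. If you only need the order formula, your argument is cleaner; if you want the structural exact sequence, the diagram-chase is the natural packaging.
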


\begin{proof}
Consider the surjective morphism:
$$\varphi : \prod_{\pp\in\Max(\Oo_K)}\Pi_{\pp^*}(L/K)^{\alpha_{\pp}}\in \Ii_L^H\mapsto \left(\alpha_{\pp}\textrm{ mod }e_{\pp}(L/K)\right)\in\oplus_{\pp\in\Max(\Oo_K)}\;\SZ/e_{\pp(L/K)}\SZ \,.$$  
Clearly, $I=\prod_{\pp\in\Max(\Oo_K)}\Pi_{\pp^*}(L/K)^{\alpha_{\pp}}$ belongs to $\mathrm{Ker}(\varphi)$ if and only if $e_{\pp}(L/K)\mid\alpha_{\pp}$ for each $\pp$, that is, $I=\prod_{\pp\in\Max(\Oo_K)}(\pp\Oo_L)^{\beta_{\pp}}.$ Thus, 
$\textrm{Ker}(\varphi)=j_K^L(\Ii_K)$ and we have the following exact sequence:
\be\label{eq:etoile} 1\to \Ii_K \stackrel{j_K^L} \to  \Ii_L^H\stackrel{\varphi}\to \oplus_{\pp\in\Max(\Oo_K)}\;\SZ/e_{\pp(L/K)}\SZ \to 0 \ee
On the other hand, applying the functor $U\mapsto U^H$ to the exact sequence
$$ 1\to\Oo_L^{\times}\to L^*\to \Pp_L\to 1, $$
leads with Hibert 90 to the following exact sequence:
\be\label{eq:2etoile} 1\to \Pp_K \stackrel{j_K^L} \to \Pp_L^H \stackrel{\delta}\to H^1(H,\Oo_L^\times)\to 1 \,.\ee 
Putting together the exact sequences (\ref{eq:etoile}) and (\ref{eq:2etoile}), we obtain the following commutative diagram:
\minCDarrowwidth16pt
\be\label{CD1}\begin{CD}
1 @>>> \Pp_K @> {j_K^L} >> \Pp_L^H @> {\delta} >> H^1(H,\Oo_L^{\times}) @>>> 1 \\
@ VVV @VVV @VVV @ VV {\tau_{L/K}} V @ VVV\\
1 @>>> \Ii_K @ > {j_K^L} >> \Ii_L^H @ > {\varphi} >>  \oplus_{\pp}\,\SZ/e_{\pp(L/K)}\SZ @ >>> 0
\end{CD}\ee

Note that the morphisme $\tau_{L/K}$ is naturally defined by the commutativity of the diagram. The snake's lemma leads to the following exact sequence:
\be\label{eq:15.} 1\to\textrm{Ker}(\tau_{L/K})\to\Cc l(K) \stackrel{\varepsilon_K^L}\longrightarrow \Pp o(L/K)\to \mathrm{Coker}(\tau_{L/K})\to 1\,.\ee 
As $$\vert \,\mathrm{Coker}(\tau_{L/K})\,\vert = \prod_{\pp}\,e_{\pp}(L/K)\times \vert \,\textrm{Ker}(\tau_{L/K})\,\vert \, /\, \vert H^1(H,\Oo_L^\times)\vert \,,$$ 
Formula (\ref{eq:13cd}) follows from sequence (\ref{eq:15.}).
\end{proof}

\begin{corollary}
Let $L/K$ be a galoisian extension with Galois group $H$ and assume that $h_K=1$. Then, the following sequence of abelian groups is exact: 
\be\label{eq:13ab} 1\to H^1(H,\Oo_L^\times)\stackrel{\tau_{L/K}}\longrightarrow \oplus_{\pp\in\Max(\Oo_K)}\,\SZ/e_{\pp(L/K)}\SZ\to \Pp o(L/K)\to 0\,.\ee
Moreover, $L/K$ is a galoisian P\'olya extension if and only if
$$\vert H^1(H,\Oo_L^\times)\vert = \prod_{\pp\in\Max(\Oo_K)}\,e_{\pp}(L/K)\,.$$

\end{corollary}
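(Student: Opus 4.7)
The plan is to deduce this corollary directly from the exact sequence (\ref{eq:15.}) and the commutative diagram (\ref{CD1}) that were established in the proof of the preceding proposition, by specializing to the case $h_K=1$.

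First I would observe that the hypothesis $h_K=1$ means $\Cc l(K)$ is trivial. Inserting this into the four-term exact sequence (\ref{eq:15.}), namely
$$1\to\mathrm{Ker}(\tau_{L/K})\to\Cc l(K) \stackrel{\varepsilon_K^L}\longrightarrow \Pp o(L/K)\to \mathrm{Coker}(\tau_{L/K})\to 1,$$
forces $\mathrm{Ker}(\tau_{L/K})=1$, so $\tau_{L/K}$ is injective, and gives an isomorphism $\Pp o(L/K)\simeq \mathrm{Coker}(\tau_{L/K})$. Combining these two facts produces exactly the four-term exact sequence (\ref{eq:13ab}), since $\tau_{L/K}$ goes from $H^1(H,\Oo_L^\times)$ to $\oplus_{\pp}\SZ/e_{\pp}(L/K)\SZ$ with cokernel $\Pp o(L/K)$. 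No further computation is required at this stage; it is just a clean extraction from diagram (\ref{CD1}).

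For the second assertion, I would recall from the excerpt that $L/K$ is a galoisian P\'olya extension precisely when $\Pp o(L/K)=\varepsilon_K^L(\Cc l(K))$. Under the assumption $h_K=1$ the right hand side is trivial, so the condition becomes $\Pp o(L/K)=0$. From the exact sequence (\ref{eq:13ab}), $\Pp o(L/K)=0$ is equivalent to $\tau_{L/K}$ being surjective (we already know it is injective), and hence to
$$\vert H^1(H,\Oo_L^\times)\vert = \Bigl\vert \,\oplus_{\pp\in\Max(\Oo_K)}\SZ/e_{\pp}(L/K)\SZ\,\Bigr\vert = \prod_{\pp\in\Max(\Oo_K)}e_{\pp}(L/K),$$
where the last product is finite because only finitely many primes $\pp$ ramify in $L/K$.

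There is no real obstacle here: everything is a formal consequence of the commutative diagram and snake lemma already set up in the proof of the previous proposition. The only mild point to be careful about is noting that the product on the right is finite (so the equality of cardinalities makes sense) and that injectivity of $\tau_{L/K}$ continues to hold without any additional hypothesis, which is immediate from $\mathrm{Ker}(\tau_{L/K})\hookrightarrow\Cc l(K)=1$.
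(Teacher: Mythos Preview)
Your proposal is correct and follows essentially the same approach as the paper: specialize the exact sequence~(\ref{eq:15.}) to $h_K=1$ to obtain $\mathrm{Ker}(\tau_{L/K})=1$ and $\Pp o(L/K)\simeq\mathrm{Coker}(\tau_{L/K})$, and then read off the equivalence for the galoisian P\'olya condition from the resulting short exact sequence. Your write-up is simply a more detailed version of the paper's terse argument.
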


\noindent Recall that the morphisme $\tau_{L/K}$ was defined by the commutativity of diagram~(\ref{CD1}). 

\begin{proof}
If $h_K=1$, sequence (\ref{eq:15.}) shows that $\textrm{Ker}(\tau_{L/K})=\{1\}$ and $\textrm{Coker}(\tau_{L/K})=\Ii_L^H/\Pp_L^H=\Pp o(L/K)$ which leads to sequence~(\ref{eq:13ab}). 
Moreover, since $h_K=1$, to say that $L/K$ is a galoisian P\'olya extension means that $\Pp o(L/K)=\{1\}$.
\end{proof}

As a particular case, sequence (\ref{eq:13ab}) leads, when $K=\SQ$, to Zantema's sequence:

\begin{corollary}\cite[p. 163]{bib:zantema}
If $L/\SQ$ is a galoisian extension with Galois group $G$, then the following sequence is exact:
\be\label{eq:8bis} 1\to H^1(G,\Oo_L^{\times})\to\oplus_{p\in\SP}\;\SZ/e_{p(L/\SQ)}\SZ\to \Pp o(L)\to 0.\ee	
\end{corollary}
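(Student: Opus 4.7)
The plan is to apply the preceding corollary (which established exact sequence~(\ref{eq:13ab})) to the special case $K=\SQ$. All the hypotheses are automatic, so the proof amounts to unwinding the notation.

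First I would note that for $K=\SQ$ the hypothesis $h_K=1$ holds trivially, so the preceding corollary is directly applicable and yields the exact sequence
\[ 1\to H^1(G,\Oo_L^\times)\stackrel{\tau_{L/\SQ}}\longrightarrow \oplus_{\pp\in\Max(\SZ)}\,\SZ/e_{\pp}(L/\SQ)\SZ\to \Pp o(L/\SQ)\to 0. \]
Next I would identify the terms with those appearing in~(\ref{eq:8bis}). The maximal ideals of $\SZ$ are exactly the ideals $p\SZ$ for $p\in\SP$, and by definition $e_{p\SZ}(L/\SQ)=e_p(L/\SQ)$, so the middle term matches. Moreover, directly from the definitions of $\Pp o(L)$ and $\Pp o(L/K)$ one has $\Pp o(L)=\Pp o(L/\SQ)$, since $\Pi_{p^*}(L)=\Pi_{(p\SZ)^*}(L/\SQ)$ for every prime $p$. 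The Galois group $H=\Gal(L/\SQ)$ is the group that the statement calls $G$, so the left-hand term is the one in~(\ref{eq:8bis}).

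Substituting these identifications into the exact sequence from the preceding corollary produces exactly the sequence~(\ref{eq:8bis}), and the proof is complete. There is essentially no obstacle: the content lies entirely in the preceding corollary, whose derivation from diagram~(\ref{CD1}) via the snake lemma has already been carried out; here one only needs to verify the identifications between the relative and absolute notations.
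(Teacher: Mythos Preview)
Your proof is correct and follows exactly the paper's approach: the paper simply states that this is the particular case $K=\SQ$ of sequence~(\ref{eq:13ab}), and you have spelled out precisely that specialization together with the obvious identifications of terms.
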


\begin{corollary}\label{th:44d}
If $L/K$ is a cyclic extension, then $$\vert\Pp o(L/K)\vert= \frac{h_K}{[L:K]}\times\frac{\prod_{\pp\in\Max(\Oo_K)}e_{\pp}(L/K)\times \prod_{\pp\vert\infty}e_{\pp}(L/K)}{(\Oo_K^\times:N_{L/K}(\Oo_L^\times))}.$$
\end{corollary}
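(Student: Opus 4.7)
The plan is to combine formula~(\ref{eq:13cd}) of the preceding proposition with the classical computation of the Herbrand quotient of the unit group $\Oo_L^\times$ in a cyclic extension. Indeed, (\ref{eq:13cd}) already expresses $\vert \Pp o(L/K)\vert$ in terms of $h_K$, the ramification indices at the finite places, and $\vert H^1(H,\Oo_L^\times)\vert$, so the whole task is to compute this last cohomology group when $H$ is cyclic.

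For $H$ cyclic, Tate cohomology is $2$-periodic, so one has the Herbrand quotient
$$ Q(\Oo_L^\times) \;=\; \frac{\vert \hat{H}^0(H,\Oo_L^\times)\vert}{\vert \hat{H}^1(H,\Oo_L^\times)\vert} \;=\; \frac{(\Oo_K^\times : N_{L/K}(\Oo_L^\times))}{\vert H^1(H,\Oo_L^\times)\vert}\,, $$
since $\hat{H}^0(H,\Oo_L^\times)=(\Oo_L^\times)^H/N_{L/K}(\Oo_L^\times)=\Oo_K^\times/N_{L/K}(\Oo_L^\times)$ and $\hat{H}^1=H^1$. The second step is to invoke the classical value of this Herbrand quotient, which is obtained from Dirichlet's unit theorem by comparing $\Oo_L^\times\otimes\SQ$ with the permutation $H$-module on the archimedean places (see, e.g., Lang's \emph{Algebraic Number Theory}, Ch.~IX, or Cassels--Fr\"ohlich): for a cyclic extension $L/K$ of number fields,
$$ Q(\Oo_L^\times) \;=\; \frac{1}{[L:K]}\,\prod_{\pp\mid \infty}\,e_{\pp}(L/K)\,, $$
where $e_{\pp}(L/K)$ at an archimedean place is $2$ if $\pp$ is a real place of $K$ which becomes complex in $L$, and $1$ otherwise. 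Combining the two identities yields
$$ \vert H^1(H,\Oo_L^\times)\vert \;=\; \frac{[L:K]\cdot (\Oo_K^\times:N_{L/K}(\Oo_L^\times))}{\prod_{\pp\mid\infty}e_{\pp}(L/K)}\,. $$

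Plugging this into (\ref{eq:13cd}) then gives exactly the announced formula. The only genuinely non-formal ingredient is the value of the Herbrand quotient of $\Oo_L^\times$; everything else is bookkeeping and a direct substitution. So the main obstacle (or rather the main input) is a reference to this classical computation, which in the present paper's style can be cited rather than reproved.
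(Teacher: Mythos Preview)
Your proof is correct and follows essentially the same approach as the paper: both start from formula~(\ref{eq:13cd}) and reduce to computing $\vert H^1(H,\Oo_L^\times)\vert$ for cyclic $H$ via the classical results on units. The only cosmetic difference is that you package this computation through the Herbrand quotient $Q(\Oo_L^\times)=\vert\hat H^0\vert/\vert H^1\vert$ and its known value $[L:K]^{-1}\prod_{\pp\mid\infty}e_{\pp}(L/K)$, whereas the paper instead uses the periodicity isomorphism $H^1(H,\Oo_L^\times)\simeq H^{-1}(H,\Oo_L^\times)$ and then cites the unit index formula from Lang and Lemmermeyer for $\vert H^{-1}\vert$; the two formulations are equivalent and yield the same value of $\vert H^1(H,\Oo_L^\times)\vert$.
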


\begin{proof}	
Assume that the cyclic group $H=\Gal(L/K)$ is generated by $\sigma$. One knows that ~\cite[IV.3.7]{bib:neukirch}:
$$H^1(H,\Oo_L^{\times})\simeq H^{-1}(H,\Oo_L^{\times})= 
\frac{\{a\in\Oo_L^{\times}\mid N_{L/K}(a)=1\}}
{\{\sigma(a)/a\mid a\in\Oo_L^{\times}\}}.$$
Following~\cite[p. 192, Cor. 2]{bib:lang} or \cite[Thm 2]{bib:lemmer}, 
the index 
$$(\,\{a\in\Oo_L^{\times}\mid N_{L/K}(a)=1\}:\{\sigma(a)/a\mid a\in\Oo_L^{\times}\}\,)$$ is equal to the product:
$$(\Oo_K^\times:N_{L/K}(\Oo_L^\times))\times [L:K] \times\frac{1}{\prod_{\pp\vert\infty}e_{\pp}(L/K)}\,.$$ 
\end{proof}

\begin{corollary}\cite[Thm 1]{bib:lemmer}\label{th:44d}
If $L/K$ is cyclic of prime degree $p$, then $$\vert\Pp o(L/K)\vert= \frac{h_K \times p^{s_{L/K}-1}}{(\Oo_K^\times:N_{L/K}(\Oo_L^\times))}$$
where $s_{L/K}$ denotes the number of ramified primes of $K$ including those at infinity.
\end{corollary}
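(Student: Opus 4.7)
The plan is to deduce this corollary as an immediate specialization of the preceding one, namely of the identity
\[ \vert\Pp o(L/K)\vert= \frac{h_K}{[L:K]}\times\frac{\prod_{\pp\in\Max(\Oo_K)}e_{\pp}(L/K)\times \prod_{\pp\vert\infty}e_{\pp}(L/K)}{(\Oo_K^\times:N_{L/K}(\Oo_L^\times))}\,. \]
First I would set $[L:K]=p$ prime and exploit the arithmetical rigidity this forces on each local ramification index: since $e_\pp(L/K)$ must divide the global degree $p$, one has $e_\pp(L/K)\in\{1,p\}$ at every place $\pp$ of $K$, finite or infinite. Writing $s_{L/K}$ for the number of places of $K$ at which $e_\pp(L/K)=p$ (i.e.\ the ramified ones, including possibly those at infinity), the combined product over all places collapses to $p^{s_{L/K}}$.

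Substituting $[L:K]=p$ and this value of the product into the previous formula then yields
\[ \vert\Pp o(L/K)\vert = \frac{h_K}{p}\cdot \frac{p^{s_{L/K}}}{(\Oo_K^\times:N_{L/K}(\Oo_L^\times))} = \frac{h_K\cdot p^{s_{L/K}-1}}{(\Oo_K^\times:N_{L/K}(\Oo_L^\times))}\,, \]
which is the stated formula.

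There is no genuine obstacle; the argument is a routine specialization. The only subtlety worth flagging in the write-up concerns the infinite places: when $p$ is odd, a real-to-complex embedding would contribute $e_\pp=2\nmid p$, which is impossible, so every infinite place is automatically unramified and $s_{L/K}$ counts only finite ramified primes; when $p=2$, real places may ramify with $e_\pp=2=p$ and must be included in $s_{L/K}$. Both cases are handled uniformly by the convention that $s_{L/K}$ counts \emph{all} ramified places including those at infinity, which matches the statement of the corollary.
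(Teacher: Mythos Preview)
Your proposal is correct and matches the paper's approach: the paper does not even give an explicit proof for this corollary, treating it as an immediate specialization of the preceding cyclic-extension formula, which is precisely what you do. Your remark on the infinite places (automatic when $p$ is odd, genuinely contributing when $p=2$) is a nice clarification that the paper omits.
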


\noindent If $L/\SQ$ is cyclic of prime degree, we obtain \cite[Cor. 3.11]{bib:chabert2003} and \cite[Prop. 3.2]{bib:zantema}. In particular, if $L/\SQ$ is cyclic with odd prime degree:
$$\vert \Pp o(L)\vert =p^{s_L-1}$$
where $s_L$ denotes the number of prime numbers which are ramified in $L$.

\begin{remark}
Assume that $K\subseteq L$ and that $K$ and $L$ are both galoisian extensions of $\SQ$. Let $G=\Gal(L/\SQ)$ and $H=\Gal(L/K)$. Then, putting together the exact sequence ~(\ref{eq:8bis}) written for $K$ and for $L$ leads to the following commutative diagram:

\minCDarrowwidth16pt
\be \begin{CD}
1 @>>> H^1(G/H,\Oo_K^\times)  @> {\tau_K} >> \oplus_p\,\SZ/e_{p(K/\SQ)}\SZ @> >> \Pp o(K) @>>> 0 \\
@ VVV @VV\mathrm{Inf} V @VVV @ VV {\varepsilon_K^L} V @ VVV\\
1 @>>> H^1(G,\Oo_L^\times)@ >{\tau_L} >> \oplus_p\,\SZ/e_{p(L/\SQ)}\SZ @ >>> \Pp o(L)@>>> 0 \end{CD}\ee 

\bigskip

The morphisms $\tau_K$and $\tau_L$ were defined in diagramm~(\ref{CD1}) while Inf denotes the inflation morphism. The snake's lemma leads to the following exact sequence:
$$ 1\to \mathrm{Ker}(\varepsilon_K^L)\to \textrm{Coker}(\textrm{Inf}) \to \oplus_p\,\SZ/e_{p(L/K)}\SZ\to \textrm{Coker}(\varepsilon_K^L) \to 0\,.$$
If $([L:K],[K:\SQ])=1$, then $\textrm{Ker}(\varepsilon_K^L)=\{1\}$, and hence, we have the following exact sequence:
$$ 1\to \textrm{Coker}(\textrm{Inf}) \to \oplus_p\,\SZ/e_{p(L/K)}\SZ\to \Pp o(L)/\varepsilon_K^L(\Pp o(K)) \to 0\,.$$
\end{remark}

\section{Non-galoisian extensions}

In the non-galoisian case, the characterization of the P\'olya fields is more difficult  and it is probably the same for the description of the P\'olya groups. Let us look at the case of a cubic number field $K$. If $K$ is a galoisian cubic number field, it follows from Corollary~\ref{th:44d} that $\vert\Pp o(K)\vert=3^{s_K-1}$, and hence, $K$ is a P\'olya field if and only if there is exactly one ramified prime in the extension $K/\SQ$. If $K$ is a non-galoisian cubic number field, it follows from \cite[Thm 1.1]{bib:zantema} that $K$ is a P\'olya field if and only if $h_K=1$, that is, the P\'olya group of $K$ is trivial if and only if the class group of $K$ is trivial. What could be the generalized assertion about P\'olya groups? Perhaps, the P\'olya group of a non-galoisian cubic number field is equal to its class group? On this line, let us recall Zantema's main result in the non-galoisian case:

\begin{theorem}\cite[Thm 6.9]{bib:zantema}\label{th:69}
Let $K$ be a number field of degree $n\geq 3$ and let $G$ be the Galois group of the normal closure of $K$ over $\SQ$. If $G$ is either isomorphic to the symmetric group $\Ss_n$ where $n\not=4$, or to the alternating group $\Aa_n$ where $n\not=3,5$, or is a Frobenius group, then the following assertions are equivalent:
\begin{enumerate}
\item every ideal $\Pi_{p^f}(K)$ where $p$ is not ramified is principal,
\item $K$ is a P\'olya field,
\item $h_K=1$.
\end{enumerate}
\end{theorem}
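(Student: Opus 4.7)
The implications $(3) \Rightarrow (2) \Rightarrow (1)$ are formal: if $h_K = 1$ then every ideal, hence every $\Pi_q(K)$, is principal; and if $K$ is a P\'olya field then \emph{a fortiori} the unramified $\Pi_{p^f}(K)$ are principal. The content lies in $(1) \Rightarrow (3)$.

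To prove $(1) \Rightarrow (3)$, let $\tilde{K}$ denote the normal closure of $K$, set $G = \Gal(\tilde{K}/\SQ)$ and $H = \Gal(\tilde{K}/K)$, so that $[G:H] = n$ and $G$ acts faithfully on the coset space $G/H$ --- this is the action underlying the hypothesis that $G \cong \Ss_n$ (respectively $\Aa_n$, respectively a Frobenius group of the stated type). For a rational prime $p$ unramified in $\tilde{K}$ with Frobenius conjugacy class $C \subseteq G$, the factorization of $p$ in $\Oo_K$ is read off the cycle type of any $\sigma \in C$ acting on $G/H$: each $f$-cycle produces a prime $\pp \subseteq \Oo_K$ above $p$ with residue degree $f$, and $\Pi_{p^f}(K)$ is the product of the primes arising from the $f$-cycles of $\sigma$. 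The strategy is to show that every ideal class of $\Cc l(K)$ is represented by a single prime $\pp$ equal to $\Pi_{p^f}(K)$ for some unramified $p$, so that hypothesis (1) collapses $\Cc l(K)$ to the trivial group.

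The argument splits into two steps. \emph{Group-theoretic step:} for each admissible $G$, exhibit a sufficiently rich supply of conjugacy classes whose action on $G/H$ has some cycle length $f$ of multiplicity exactly one; for such a class, $\Pi_{p^f}(K)$ is a single prime $\pp$ of $\Oo_K$. Concretely, for $G \cong \Ss_n$ one uses cycle types with a unique longest cycle (an $n$-cycle, or an $(n-1)$-cycle together with a fixed point); for $G \cong \Aa_n$ with $n \notin \{3,5\}$ one takes the even-parity analogues; for a Frobenius group acting naturally, the semidirect structure of kernel and complement supplies elements with isolated cycle lengths. The excluded degrees are precisely those where the conjugacy-class structure fails to furnish enough such elements. \emph{Density step:} by the Chebotarev density theorem, any prescribed conjugacy class $C \subseteq G$ is realized as a Frobenius at infinitely many unramified $p$, and conversely every class $c \in \Cc l(K)$ contains primes $\pp$ whose underlying $p$ has Frobenius in any chosen conjugacy class. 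Combining, every $c \in \Cc l(K)$ contains a prime $\pp$ arising as the unique $f$-cycle of some ``good'' Frobenius, so $\pp = \Pi_{p^f}(K)$, which is principal by (1). Thus $c$ is trivial and $h_K = 1$.

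The principal obstacle is the group-theoretic step: one must verify, essentially by case analysis for each of the three families of $G$, that ``isolated cycle-length'' conjugacy classes exist in enough variety to realize every ideal class through Chebotarev, and the Frobenius-group case --- whose natural action on $G/H$ is structurally different from the symmetric or alternating cases --- requires separate combinatorial analysis built on the semidirect-product decomposition. The excluded exceptional degrees $n = 4$ for $\Ss_n$ and $n \in \{3,5\}$ for $\Aa_n$ must be identified as precisely the degrees where this combinatorial argument breaks down.
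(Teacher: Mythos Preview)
The paper does not prove this theorem: it is quoted verbatim from Zantema \cite[Thm~6.9]{bib:zantema} with no argument, and is used only to motivate the conjecture that follows. There is thus no proof in the present paper against which to compare your proposal.

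On the substance of your sketch: the overall plan --- read the splitting of $p$ in $K$ from the cycle structure of a Frobenius on $G/H$, isolate conjugacy classes with a cycle length of multiplicity one so that the corresponding $\Pi_{p^f}(K)$ is a single prime, then invoke Chebotarev --- is indeed the mechanism behind Zantema's proof. But your ``density step'' hides a genuine gap. You write that ``every class $c\in\Cc l(K)$ contains primes $\pp$ whose underlying $p$ has Frobenius in any chosen conjugacy class''. Chebotarev for $\tilde K/\SQ$ controls the Frobenius of $p$ in $G$; it says nothing about which ideal class of $K$ the distinguished prime $\pp$ above $p$ falls into. To prescribe both simultaneously one must pass to a larger Galois extension of $\SQ$ --- essentially the compositum of $\tilde K$ with the Hilbert class field $H_K$ of $K$ --- apply Chebotarev there, and then argue that the projection to $G$ and the Artin map to $\Cc l(K)$ can be made compatible. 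That compatibility is not automatic: $H_K$ need not be Galois over $\SQ$, and the structure of $\Gal(\tilde K H_K/\SQ)$ depends on how $\Cc l(K)$ interacts with $G$. The group-theoretic hypotheses on $G$ are used at this point as well, not only in the cycle-type step. Your proposal packages all of this into a single clause, which is precisely where the real content of Zantema's argument lies; as written, it is a plausible outline but not yet a proof.
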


In other words, under the previous hypotheses, denoting by $\Pp o(K)_{nr}$ the subgroup of $\Pp o(K)$ generated by the classes of the ideals $\Pi_{p^f}(K)$ where $p$ is not ramified, one has the equivalences:
$\Pp o(K)_{nr}=\{1\}\Leftrightarrow \Pp o(K)=\{1\}\Leftrightarrow \Cc l(K)=\{1\}$.
Following the spirit if the translation of Zantema's results in Section~\ref{sec:3}, we formulate the following unreasonable conjecture:

\medskip

\noindent{\bf Conjecture}.
{\em Under the hypotheses of Theorem~\ref{th:69}, the following equalities hold:}
$$\Pp o(K)_{nr}=\Pp o(K)=\Cc l(K)\,.$$

This conjecture will be proved in a forthcoming paper~\cite{bib:chabertII}.

\medskip

\noindent{\bf Acknowledgment}.
The author wants to thank the {\em Spirou Team} (I.H.P., Paris) where a first version of this paper was discussed.


\end{document}